\newcommand{\Z}{\mathbb Z}
\newcommand{\R}{\mathbb R}
\newcommand{\cp}{C_+}
\newcommand{\nun}[1]{\buildrel _{\circ} \over {\nu}(#1)}  %%% we might want to change this
\newcommand{\la}{\langle}
\newcommand{\ra}{\rangle}
\def\tY{\tilde{Y}}
\newtheorem{theorem}{Theorem}[section]
\newtheorem{lemma}[theorem]{Lemma}
\newtheorem{proposition}[theorem]{Proposition}
\newtheorem{corollary}[theorem]{Corollary}%
\theoremstyle{definition}
\title{Smooth surfaces with non-simply-connected complements}
\author[Hee Jung Kim]{Hee Jung Kim}
\address{Department of Mathematics\newline\indent
Louisiana State University\newline\indent
Baton Rouge, LA 70803}
\email{\rm{heekim@math.lsu.edu}}
\author[Daniel Ruberman]{Daniel Ruberman}
\address{Department of Mathematics, MS 050\newline\indent Brandeis
University \newline\indent Waltham, MA 02454}
\email{\rm{ruberman@brandeis.edu}}
\begin{document}
\begin{abstract}
We give two constructions of surfaces in simply-connected $4$-manifolds with non simply-connected complements.  One is an iteration of the twisted rim surgery introduced by the first author~\cite{kim:surfaces}.  We also construct, for any group $G$ satisfying some simple conditions, a simply-connected symplectic manifold containing a symplectic surface whose complement has fundamental group $G$.  In  each case, we produce infinitely many smoothly inequivalent surfaces that are equivalent up to smooth s-cobordism and hence are topologically equivalent for good groups.
\end{abstract}
\maketitle

\section{Introduction}
In this paper, we study surfaces embedded in simply-connected $4$-manifolds whose complements are not simply-connected.  In the first part of the paper, we use a variation of the Fintushel-Stern rim surgery technique~\cite{fs:surfaces} introduced in first author's thesis~\cite{kim:surfaces}, called {\em $m$-twist} rim surgery.  Starting with an embedded surface $\Sigma \subset X$, and a knot $K$ in $S^3$,  $m$-twist rim surgery produces a new surface $\Sigma_K(m) \subset X$.  This construction shares with rim surgery the property that for suitable initial pairs (SW-pairs in the terminology of~\cite{fs:surfaces}) the resulting surface $(X,\Sigma_K(m))$ is smoothly knotted with respect to $(X,\Sigma)$.  For instance, this will be the case if $X$ is symplectic and $\Sigma$ symplectically embedded, and the Alexander polynomial of $K$ is nontrivial.   For some choices of the parameter $m$, this construction produces new knot groups of interest.   For example, for any odd number $n$, we get infinitely many knotted surfaces in $S^2\times S^2$ with knot group a dihedral group $D_{2n}$.

In some other circumstances, for instance if $m=1$, we show that $m$-twist rim surgery does not change the surface knot group $\pi_1(X-\Sigma)$.  We will show that in many cases, $(X,\Sigma_K(m))$ is topologically unknotted.   This generalizes our earlier paper~\cite{kim-ruberman:surfaces} which dealt with the case that $\pi_1(X-\Sigma)$ is finite cyclic.  These results rely on the $5$-dimensional s-cobordism theorem, which at present holds for a restricted class of groups~\cite{freedman-quinn,freedman-teichner:subexponential,krushkal-quinn:subexponential}, normally referred to as `good' groups.

In a somewhat different direction, we investigate the possibilities for the knot group of a symplectic surface in a simply-connected symplectic manifold.  We show that the obvious topological necessary conditions on a group $G$ are in fact sufficient to show that $G = \pi_1(X-\Sigma)$ where $\Sigma$ is a symplectic surface and $X$ is simply-connected .  These surfaces can be further modified by twisted rim surgery to produce infinite families of smoothly knotted surfaces.

\section{Twisted rim surgery}\label{rimsec}
Let $X$ be a simply-connected 4-manifold and $\Sigma$ an oriented
embedded surface in $X$.  Like the original rim surgery~\cite{fs:surfaces}, the operation of $m$-twist rim surgery from~\cite{kim:surfaces} provides a method to modify surfaces without changing the ambient $4$-manifold $X$.  The extra twist in the construction gives rise to some interesting surface knot groups.  Let us briefly review the construction. Let $K$ be any knot in $S^3$ and
$E(K)$ be its exterior. Consider a torus $T$ with $T\cdot T=0$,
called  a {\em rim torus}, which is the preimage in $\partial\nu(\Sigma)$
of a closed curve $\alpha$ in $\Sigma$.  A new
surface $\Sigma_K(m)$ is obtained by taking out a neighborhood $T\times D^2$ of a
rim torus from $X$ and gluing $S^1\times E(K)$ back
using an additional twist on the boundary. An equivalent
description of this construction is given in ~\cite{kim:surfaces}.
Identify the neighborhood $\nu(\alpha)$ of the curve $\alpha$ in $X$
with $S^1\times B^3$ so that the restriction of $\nu(\alpha)$ to
$\Sigma$ has the form $S^1\times I$. We now consider a self
diffeomorphism $\tau$ of $(S^3,K)$ called the `twist map' along $K$. Let $\partial E(K)\times I=K\times \partial D^2\times I$ be a collar of $\partial E(K)$ in $E(K)$ under a suitable trivialization with $0$-framing. The map $\tau$ is given by
\begin{equation}\label{tau}
\tau(\overline\theta, e^{i\varphi}, t) =(\overline\theta,
e^{i(\varphi + 2\pi t)}, t) \quad \mbox{for} \quad (\overline\theta,
e^{i\varphi}, t)\in K\times{\partial{D^2}}\times{I}
\end{equation}
and otherwise, $\tau(y)=y$. (Here we use $K\cong S^1\cong \R/\Z$.)

For any integer $m$, we
define the $m$-twist rim surgery on $(X,\Sigma)$ by an operation
producing a new pair
\begin{equation}\label{twistrimsurgery}
(X,\Sigma_K(m))=(X,\Sigma)-S^1\times (B^3,I)\cup_{\partial} S^1 \times_{\tau^m}(B^3,K_+).
\end{equation}
Here, we have written $(S^3,K) = (B^3,K_+) \cup  (B^3,K_{-})$ where $(B^3,K_{-})$ is an unknotted ball pair.  Note that $E(K)$ can be viewed as a codimension-$0$ submanifold of the complement $\cp(K) = B^3 -K_+$ onto which $\cp(K)$ deformation retracts.  Hence we can regard $\tau$ as an automorphism of the pair $(B^3,K_+)$, or equally as an automorphism of $\cp(K)$ that is the identity near $K_+$.
\subsection{Twisted rim surgery and the knot group}
For a surface $\Sigma$ carrying a non-trivial homology class in a simply-connected $4$-manifold $X$, the first homology group $H_1(X-\Sigma)$ is always finite cyclic, of order that we will usually write as $d$.  This coincides with the multiplicity of the homology class carried by $\Sigma$ in $H_2(X)$.   The way in which $m$-twist rim surgery affects the fundamental group of a surface knot depends to some degree on the relation between $m$ and $d$.  In this section, we assume $\pi_1(X-\Sigma)=\Z/d$, generated by the meridian $\mu_{\Sigma}$ of $\Sigma$.

In our previous paper~\cite{kim-ruberman:surfaces}, we considered the $m$-twist rim surgery in the case that $(m,d) =1$, and showed that the group $\pi_1(X-\Sigma_K(m))$ is $\Z/d$, no matter what $K$ is.  See Proposition~\ref{fundamentalgp} below for a generalization.    We now consider the opposite situation, in which $m=d$.    Using $m$-twist rim surgery along appropriate knots, we will construct surfaces in $X$ whose surface knot group are some non-abelian finite groups.   In what follows we will denote by $Y^d$ a cyclic $d$-fold cover of a space $Y$, and by $(Y,K)^d$ a $d$-fold cover of $Y$ branched along a submanifold $K$.

Take a curve $\alpha$ in $\Sigma$ and a framing of $\nu(\Sigma)$ along such that the push-off of $\alpha$ into
$\partial\nu(\Sigma)$ is homologically trivial in $X-\Sigma$.  For any knot $K$ in $S^3$, performing $d$-twist rim surgery along the rim torus $T\cong\alpha\times \mu_{\Sigma}$ gives a new surface
$\Sigma_K(d)$ in $X$ with $H_1(X-\Sigma_K(d))\cong\Z/d$.

\begin{lemma}\label{branchcover}
Suppose $\pi_1(X-\Sigma)\cong \Z/d$.  Then  $\pi_1(X-\Sigma_K(d))$ is a semi-direct product of $\pi_1((S^3,K)^d)$ and $\Z/d$, where the action of $\Z/d$ is by the covering transformations of the branched cover.
\end{lemma}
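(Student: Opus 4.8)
The plan is to compute $\pi_1(X-\Sigma_K(d))$ by van Kampen's theorem applied to the decomposition coming from the rim surgery construction, and then to recognize the resulting group as an extension governed by the branched covering of $K$. Concretely, I would write
\[
X-\Sigma_K(d)=W\cup_{T^3}\bigl(S^1\times E(K)\bigr),\qquad W=(X-\Sigma)-\nu(T),
\]
where the common boundary is the $3$--torus $T^3=\p\nu(T)$ and (as in \cite{fs:surfaces,kim:surfaces}) the gluing carries the $S^1$ factor to the rim direction $\alpha$, the meridian $\mu_K$ of $K$ to $\mu_\Sigma$, and the longitude $\lambda_K$ to $\delta\mu_\Sigma^{\,d}$, where $\delta$ is the meridian of the rim torus $T$; the exponent $d$ records the $d$--fold twist $\tau^{d}$, and it is precisely the matching of this exponent with the order of $\pi_1(X-\Sigma)$ that will make the lemma work. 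Regluing $\nu(T)=T^2\times D^2$ in place of $S^1\times E(K)$ recovers $X-\Sigma$, i.e.\ $X-\Sigma=W\cup_{T^3}(T^2\times D^2)$.

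The key structural input is the description of $\pi_1(W)$. Since filling $T^3\subset\p W$ along $\delta$ produces $X-\Sigma$, we have $\pi_1(W)/\langle\langle\delta\rangle\rangle\cong\pi_1(X-\Sigma)\cong\Z/d$. Using the hypothesis that the framing is chosen so that the push-off of $\alpha$ is homologically trivial in $X-\Sigma$ (hence, $\pi_1(X-\Sigma)$ being abelian, null-homotopic there), together with the standard fact that $T$ is null-homologous in $X-\Sigma$, I would prove that $\pi_1(W)$ is generated by $\mu_\Sigma$ and $\delta$ subject only to $[\mu_\Sigma,\delta]=1$ and $\mu_\Sigma^{\,d}=1$, so that $\pi_1(W)\cong\Z/d\times\Z$ with the push-off of $\alpha$ trivial. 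I expect this to be the main obstacle: the remaining steps are essentially formal, whereas pinning down $\pi_1(W)$ and locating exactly where the choice of framing enters requires care (this is the point at which one invokes the homological properties of rim tori).

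Granting this, van Kampen presents $\pi_1(X-\Sigma_K(d))$ as the quotient of $\pi_1(W)*\bigl(\Z\times\pi_1 E(K)\bigr)$ identifying the two images of $\pi_1(T^3)$. The $S^1$/$\alpha$ generator and the generator $\delta$ are eliminated (the former is trivial, the latter becomes $\lambda_K\mu_K^{-d}$); the surviving relations are $\mu_K=\mu_\Sigma$ and $\mu_\Sigma^{\,d}=1$, whence $\mu_K^{\,d}=1$ and the twist term $\mu_K^{\,d}$ disappears. Hence
\[
\pi_1\bigl(X-\Sigma_K(d)\bigr)\;\cong\;\pi_1(E(K))/\langle\langle\mu_K^{\,d}\rangle\rangle\;=\;\pi_1(S^3-K)/\langle\langle\mu_K^{\,d}\rangle\rangle ,
\]
the orbifold fundamental group of $S^3$ with cone order $d$ along $K$. (The same computation applies verbatim with $\cp(K)$ in place of $E(K)$, since $\cp(K)$ deformation retracts onto $E(K)$.)

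Finally I would identify this group with $\pi_1((S^3,K)^d)\rtimes\Z/d$. The composite $\pi_1(S^3-K)\to H_1(S^3-K)=\Z\to\Z/d$ descends to a surjection $\Phi$ from $\pi_1(E(K))/\langle\langle\mu_K^{\,d}\rangle\rangle$ onto $\Z/d$. Its kernel is $\pi_1(E(K)^d)/\langle\langle\mu_K^{\,d}\rangle\rangle$; since $\mu_K^{\,d}$, viewed in $\pi_1(E(K)^d)$, is a lift of the slope along which $E(K)^d$ is Dehn filled to form $(S^3,K)^d$, this kernel is $\pi_1((S^3,K)^d)$. The extension $1\to\pi_1((S^3,K)^d)\to\pi_1(X-\Sigma_K(d))\to\Z/d\to1$ splits, because $\mu_K$ has order dividing $d$ in the quotient while $\Phi(\mu_K)$ generates $\Z/d$, so $\langle\mu_K\rangle\cong\Z/d$ is a section. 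The corresponding action of $\Z/d$ on $\pi_1((S^3,K)^d)$ is conjugation by $\mu_K$, which on $\pi_1(E(K)^d)$ realizes the generator of the deck group of $E(K)^d\to E(K)$; this deck transformation extends over $(S^3,K)^d\to S^3$ to the covering transformation of the branched cover, giving exactly the asserted semi-direct product.
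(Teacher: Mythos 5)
Your outline is a genuinely different route from the paper's, and its skeleton is sound, but it hinges on a claim that you flag but do not actually establish, and that is where the real work of the lemma lies.

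The paper never computes $\pi_1(W)$ for $W=(X-\Sigma)-\nu(T)$. Instead it passes immediately to the $d$-fold (unbranched) cover of $X-\Sigma_K(d)$, decomposes that cover, and exploits two facts: $(X-\Sigma)^d$ is simply connected (so the first van Kampen piece contributes nothing), and the lift $\tilde\tau^d$ coincides with the twist map along the lifted knot, so its effect on $\pi_1$ is conjugation by $\mu_{\tilde K}$ and hence vanishes once $\mu_{\tilde K}=1$. This yields $\pi_1((X-\Sigma_K(d))^d)\cong\pi_1((S^3,K)^d)$ directly, and the splitting comes from the meridian having order $d$. Your route instead applies van Kampen downstairs and recognizes the answer as the orbifold fundamental group $\pi_1(E(K))/\langle\langle\mu_K^d\rangle\rangle$; the identification of that group with $\pi_1((S^3,K)^d)\rtimes\Z/d$ via $\langle\mu_K\rangle$ as a section, and of the conjugation action with the deck transformation, is standard and correct. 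This is a clean and conceptually appealing alternative.

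The gap is the assertion that $\pi_1(W)\cong\Z/d\times\Z$, generated by $\mu_\Sigma$ (order $d$) and $\delta$, with the push-off of $\alpha$ trivial. You state you ``would prove'' this and call it the main obstacle, but you do not give the argument, and it is not an obvious consequence of the stated ingredients (null-homology of $T$ and the framing condition). Removing a torus from a $4$-manifold complement can change $\pi_1$ substantially, so something specific to rim tori must be invoked. The claim is in fact true; one way to see it is to decompose $W$ further as $\bigl(X-\Sigma-\nu(\alpha)\bigr)\cup\bigl(\nu(\alpha)\cap(X-\Sigma)-\nu(T)\bigr)$. The first piece has the same $\pi_1$ as $X-\Sigma$ (because $\nu(\alpha)\cap(X-\Sigma)=S^1\times(B^3-I)$ deformation retracts to the gluing region, so removing it does not change $\pi_1$ — this is the observation the paper uses). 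The second piece is homotopy equivalent to $T^3$, and a van Kampen computation over $S^1\times(\text{annulus})$ then gives $(\Z/d)*_{\Z^2}\Z^3\cong\Z/d\times\Z$, with $\alpha$ dying because the framing hypothesis makes its push-off null-homotopic in the abelian group $\pi_1(X-\Sigma)$. So the step does not fail, but as written your proposal leaves it unproved, and filling it in requires essentially the same kind of decomposition argument that the paper performs (just downstairs rather than in the $d$-fold cover). To make the proof complete you must either supply this computation of $\pi_1(W)$ or follow the paper in passing to the cover where the analogous piece is simply connected.
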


\begin{proof}
Write $H = \pi_1((S^3,K)^d)$.   It is sufficient to show that the fundamental group of the $d$-fold
unbranched cover $(X-\Sigma_K(d))^d$ is $H$, and that the exact sequence
\begin{equation}\label{hurewicz}
0\rightarrow H\rightarrow \pi_1(X-\Sigma_K(d))\stackrel{hurew}\longrightarrow\Z/d\rightarrow 0
\end{equation}
splits; the identification of the action of $\Z/d$ should be clear by the end of the argument.
Considering ~\eqref{twistrimsurgery} and the choice of the curve
$\alpha$, we decompose $(X-\Sigma_K(d))$ as
\begin{equation*}\label{complement}
X-\Sigma_K(d) =X-\Sigma - (S^1\times (B^3-I))\cup_{\partial} S^1
\times_{\tau^d} \cp(K)
\end{equation*}
with a corresponding decomposition for the $d$-fold cover:
$(X-\Sigma_K(d))^d$
\begin{equation}\label{branchcover1}
(X-\Sigma_K(d))^d =(X-\Sigma)^d - (S^1\times (B^3-I))\cup_{\partial} S^1
\times_{\tilde\tau^d} \cp(K)^d.
\end{equation}

%\comment{The notation doesn't seem quite right .... I corrected. Check it please.  I corrected your correction; could you check this? Danny: this notation is fine but I think we need to clarify this more; we may say like taking a neighborhood of $\alpha$ identified with $S^1\times (B^3,\nu(I))$ where $S^1\times \nu(I)\cong \nu(\Sigma)|_{\alpha}$. Then $X-\nu(\Sigma) - S^1\times (B^3-\nu( I))$ is same as $X-\nu(\Sigma) - S^1\times (B^3,\nu( I))$ but $X-\nu(\Sigma) - S^1\times (B^3-\nu( I))$ looks more natural as you corrected. }

Referring to the decomposition~\eqref{complement}, note that the inclusion of $X-\Sigma - (S^1\times (B^3-I))$ into $X-\Sigma$ induces an isomorphism on $\pi_1$, so the meridian $\mu_{\Sigma}$ has order $d$ in $ \pi_1(X-\Sigma_K(d))$.  It follows that the sequence~\eqref{hurewicz} splits, as asserted, and that the action of $\Z/d$ on the kernel of the Hurewicz map is given by conjugation by $\mu_{\Sigma}$.

Applying van Kampen's Theorem to the decomposition~\eqref{branchcover1} gives the
following diagram:
$$
\xymatrix@C=0pc{
&\pi_1((X-\Sigma)^d - S^1\times (B^3-I))Ê\ar[dr]^{\psi_1}&\\
{\pi_1 (S^1\times(\partial{B^3}-\{\mbox{two points}\}))} Ê\ar[ur]^{\varphi_1} \ar[dr]^{\varphi_2}
&&\pi_1((X - \Sigma_{K}(d))^d)\\
&\pi_1(S^1\times_{\tilde\tau^d} \cp(K)^d)Ê\ar[ur]^{\psi_2}&
}
$$

%$$
%\begin{array}{ccc}
%ÊÊ& \makebox[0 pc]{$\pi_1((X-\nu(\Sigma))^d - S^1\times (B^3-\nu(I)))$} & \\
%\\
%\varphi_1\nearrow & & \searrow\psi_1 \\
%\\
%\makebox[0 pc]{$\pi_1 (S^1\times(\partial{B^3}-\{\mbox{two points} Ê
%\}))$} & & \makebox[0 pc]{$\quad \pi_1(X - \nu(\Sigma_{K}(d)))^d$}\\
%\\
%\varphi_2\searrow & & \nearrow\psi_2\\
%\\
%& \pi_1(S^1\times_{\tilde\tau^d} E(K)^d) &\\
%\end{array}
%$$

Note that $(X-\Sigma)^d - S^1\times (B^3-I)$ is isomorphic to
$(X-\Sigma)^d$ in $\pi_1$ which is trivial.  So, the diagram
shows
\begin{equation}\label{unbranchcover1}
\pi_1((X - \Sigma_{K}(d))^d)=\la \pi_1(E(K)^d,*) \mid \mu_{\tilde
K}=1, \beta =\tilde\tau_{*}^{d}(\beta),
\forall\beta\in\pi_1(E(K)^d,*)\ra
\end{equation}
 where $\mu_{\tilde K}$ is a
meridian of the lifted knot $K$.

Recall that the lift $\tilde\tau$ is given in ~\cite{kim:surfaces};
\begin{equation}\label{lifttau}
\tilde\tau (x) =
\begin{cases}
\phi(x) & \text{if $x\in {E(K)}^d-\partial {E(K)}^d\times I$}\\
(\overline{\theta}, e^{i((s/d)\cdot 2\pi +\varphi)},s) & \text{if
$x=(\overline{\theta}, e^{i{\varphi}},s)\in \partial {E(K)}^d\times
I$}\\
 x& \text{otherwise}
\end{cases}
\end{equation}
where $\phi$ is the canonical generator of the group $\Z/d$ of
covering transformations.

We observe that the lifted map $\tilde\tau^d$ is the same as a twist map
$\tau_{\tilde K}$ along the lifted knot of $K$ as in~\eqref{tau}
and so in the presentation ~\eqref{unbranchcover1}, we have that
$\tilde\tau_{*}^{d}(\beta)=\mu_{\tilde K}^{-1}\beta\mu_{\tilde K}$ for any
$\beta\in\pi_1(S^3,K,*)^d$. Since $\mu_{\tilde K}=1$,
$\tilde\tau_{*}^{d}(\beta)=\beta$. This implies

$$\pi_1((X - \Sigma_{K}(d))^d)=\la \pi_1(E(K)^d,*) \mid \mu_{\tilde
K}=1\ra=\pi_1((S^3,K)^d)=H.$$
\end{proof}

\begin{corollary}\label{finitebranchcover}
If $\pi_1(X- \Sigma) \cong \Z/d$ and $\pi_1((S^3,K)^d)$ is finite, then so is $\pi_1(X-\Sigma_{K}(d))$.
\end{corollary}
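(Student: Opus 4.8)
The plan is simply to invoke Lemma~\ref{branchcover} and then count orders. That lemma presents $\pi_1(X-\Sigma_K(d))$ as the semidirect product $H\rtimes\Z/d$, where $H=\pi_1((S^3,K)^d)$ and $\Z/d$ acts via the covering transformations of the $d$-fold branched cover of $(S^3,K)$; equivalently, one has the split short exact sequence~\eqref{hurewicz}. Granting that, the corollary reduces to the elementary observation that an extension of a finite group by a finite group is finite: from $0\to H\to \pi_1(X-\Sigma_K(d))\to\Z/d\to 0$ one reads off that $|\pi_1(X-\Sigma_K(d))|=|H|\cdot d$. So the only hypothesis actually used is the finiteness of $H$, which is exactly what is assumed, together with the automatic finiteness of $\Z/d$.

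There is essentially no obstacle here: the content lies entirely in Lemma~\ref{branchcover}, and the corollary is a one-line consequence of it. What is perhaps worth remarking is why this hypothesis is the natural one to isolate. The $d$-fold cyclic cover of $S^3$ branched over $K$ is a closed orientable $3$-manifold, so its fundamental group is finite precisely when that manifold is a spherical space form. This occurs in many familiar situations --- for instance $d=2$ with $K$ a $2$-bridge knot, where the branched double cover is a lens space and $H$ is cyclic, or torus knots and certain Montesinos knots, where $H$ can be finite and non-abelian. Thus Corollary~\ref{finitebranchcover} is exactly the input needed to realize such groups (extended by $\Z/d$) as surface knot groups $\pi_1(X-\Sigma_K(d))$ via the $d$-twist rim surgery of Section~\ref{rimsec}.
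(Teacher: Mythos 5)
Your argument is exactly the paper's (implicit) one: the corollary follows immediately from Lemma~\ref{branchcover}, since an extension of the finite group $\Z/d$ by the finite group $H=\pi_1((S^3,K)^d)$ has order $|H|\cdot d$ and is therefore finite. The paper does not spell out a proof at all, treating it as a direct consequence of the lemma, so your reasoning matches.
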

Since $\pi_1(X-\Sigma_{K}(d))$ is a semi-direct product of $H$ and
$\Z/d$, we denote $\pi_1(X-\Sigma_{K}(d))$ by $G$.

Corollary~\ref{finitebranchcover} suggests the question: what finite groups can be obtained by twisted rim surgery, starting with a surface whose knot group is cyclic?  From Perelman's work on geometrization~\cite{cao-zhu:ricci,morgan-tian:poincare} it suffices to know which spherical space forms arise as cyclic branched covers of knots in $S^3$.  The possibilities for the the fundamental groups of such branched covers, as well as the action of $\Z/d$, were determined by Plotnick and Suciu~\cite[Section 5]{plotnick-suciu:spherical}.  The full list is a little complicated, but the following are worth noting:
\begin{enumerate}
\item\label{quaternion} Taking $d=3$ and $K$ a trefoil knot then $H$ is a quaternion group $Q_8$, with the action of $\Z/3$ permuting the unit quaternions $\imath,\ \jmath$ and $k$.  So for example, starting with a degree-$3$ curve in ${\bf CP}^2$, we obtain an embedded torus in ${\bf CP}^2$ with group $G = Q(8) \rtimes \Z/3$.
\item\label{dihedral} Taking $d=2$ and $K$ to be a $2$-bridge knot $K_{p,q}$ with
$(p,q)=1$ then  $H$ is a cyclic group $\Z/p$, where $\Z/2$ acts by multiplication by $-1$, so that $G$
is a dihedral group $D_{2p}$.   This group can be realized as a surface knot group in ${\bf CP}^2$, by taking $\Sigma$ to be a degree-$2$ curve (a sphere) with a handle added to create a torus.  In the next section, we will want to perform a further twisted rim surgery on the resulting surface $\Sigma_{K}(2)$, but $({\bf CP}^2,\Sigma_{K}(2))$ is not an SW-pair in the sense of~\cite{fs:surfaces}, and so is not a good starting point for rim surgery constructions.  One could instead choose $\Sigma$ to be a curve in $S^2 \times S^2$ of bidegree $(2,2)$.
\item The Poincare homology sphere is the $p$ fold cover of the $(q,r)$ torus knot for $\{p,q,r\} = \{2,3,5\}$, giving three different extensions $G$ with subgroup $H =I^{*}=\pi_1(PHS)$.  For $d= 3,\ 5$ one obtains interesting surfaces in ${\bf CP}^2$, while for $d=2$ we would work with surfaces in $S^2 \times S^2$.
\end{enumerate}

A further interesting aspect of the second family of surfaces is that the group doesn't depend on $q$, but the knots $\Sigma_{K_{p,q}}(2)$ and  $\Sigma_{K_{p,q'}}(2)$ will be different if  $\Delta_{K_{p,q}}(t) \neq \Delta_{K_{p,q'}}(t)$ and $(X,\Sigma)$ is an SW-pair. So we can in principle obtain many knotted surfaces with a given dihedral knot group.  However, in the next section, we will do better than this, and obtain infinitely many such surfaces.

\subsection{Iterated rim surgery}\label{iterated}
We have constructed surfaces whose surface knot group is no longer abelian by $d$-twist rim surgery, starting with a surface whose complement has $\pi_1 = \Z/d$. Now, we seek to modify these
surfaces using twist rim surgery without changing the fundamental group.   With some additional hypotheses, this will produce surfaces that are smoothly knotted but topologically standard.

Given a sequence $K_1,\ldots,K_n$ of knots and integers $m_1,\ldots,m_n$, and a surface $\Sigma \subset X$, we can do a sequence of twisted rim surgeries. The result of this iterated rim surgery will be denoted $(X,\Sigma_{K_1,\ldots,K_n}(m_1,\ldots,m_n))$.  We will generally assume that the curves $\alpha_i \subset \Sigma$ that determine the rim tori are all parallel on $\Sigma$, and that curve $\alpha$ has a pushoff that is homotopically trivial in $X- \Sigma$.   It is easy to see that this condition is preserved after each surgery.

Fix an integer $d$, and start with a surface $\Sigma \subset X$ with $H_1(X -\Sigma) =\Z/d$, and such that the meridian $\mu_\Sigma$ has order $d$ in $G =\pi_1(X -\Sigma)$.  (This is the case for the surfaces constructed in the previous section.)   For any integer $m$ and knot $J$, consider the $m$-twist rim surgery along a rim torus $\alpha \times \mu_\Sigma$. Then in
certain cases, the knot group of the surface is preserved.
\begin{proposition}\label{fundamentalgp}
If $(m,d)=1$ then the knot group of $\Sigma_J(m)$ is isomorphic to $G =\pi_1(X -\Sigma)$.
\end{proposition}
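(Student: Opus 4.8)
The plan is to compute $\pi_1(X-\Sigma_J(m))$ by van Kampen's theorem, following the proof of Lemma~\ref{branchcover} but handling the extra twist directly by a change of generators rather than by passing to a cover. Using~\eqref{twistrimsurgery} and the choice of $\alpha$, write
$$X-\Sigma_J(m)=\bigl(X-\Sigma-S^1\times(B^3-I)\bigr)\cup_{\partial}S^1\times_{\tau^m}\cp(J),$$
so that van Kampen applies with the two pieces meeting along the torus $S^1\times(\partial B^3-\{\text{two points}\})$, whose fundamental group is $\Z\oplus\Z$ generated by the $S^1$-factor $\ell$ and a loop $\mu$ around a puncture. As observed in the proof of Lemma~\ref{branchcover}, the inclusion $X-\Sigma-S^1\times(B^3-I)\hookrightarrow X-\Sigma$ induces a $\pi_1$-isomorphism, so the first piece contributes $G$, with $\ell$ mapping to a pushoff of $\alpha$ (null-homotopic in $X-\Sigma$ by our standing hypothesis on $\alpha$) and $\mu$ to the meridian $\mu_\Sigma$. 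Since $\cp(J)$ deformation retracts onto $E(J)$, the second piece is the mapping torus of $\tau^m\colon E(J)\to E(J)$, with $\pi_1=\langle\,\pi_1(E(J)),\sigma\mid\sigma\beta\sigma^{-1}=\tau^m_*(\beta)\,\rangle$, where $\sigma$ is the class of the mapping-torus circle; under this inclusion $\ell\mapsto\sigma$ and $\mu\mapsto\mu_J$, the meridian of $J$.

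The twist map $\tau$ is supported in a collar of $\partial E(J)$, and the same computation that produces $\tilde\tau_{*}^{d}(\beta)=\mu_{\tilde K}^{-1}\beta\mu_{\tilde K}$ in the proof of Lemma~\ref{branchcover} shows here that $\tau^m_*(\beta)=\mu_J^{-m}\beta\mu_J^{m}$ for every $\beta\in\pi_1(E(J))$. Feeding this together with the gluing data into van Kampen's theorem gives
$$\pi_1(X-\Sigma_J(m))=\bigl\langle\,G,\ \pi_1(E(J)),\ \sigma\ \bigm|\ \sigma\beta\sigma^{-1}=\mu_J^{-m}\beta\mu_J^{m},\ \sigma=1,\ \mu_J=\mu_\Sigma\,\bigr\rangle.$$
Eliminating $\sigma$, the first relation becomes the statement that $\mu_J^{m}$ is central in $\pi_1(E(J))$, so this group is the pushout of $G$ and $\pi_1(E(J))$ along the identification $\mu_\Sigma=\mu_J$, subject to the extra relation that $\mu_J^{m}$ be central in $\pi_1(E(J))$.

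Now the hypothesis $(m,d)=1$ enters. In $G$ the meridian $\mu_\Sigma$ has order $d$, so in the pushout $\mu_J^{d}=1$; choosing $a,b\in\Z$ with $am+bd=1$ yields $\mu_J=(\mu_J^{m})^{a}(\mu_J^{d})^{b}=(\mu_J^{m})^{a}$, which is therefore central in $\pi_1(E(J))$. Since a meridian normally generates the knot group, a central meridian generates it outright, so the image of $\pi_1(E(J))$ in the pushout is the cyclic group generated by $\mu_J=\mu_\Sigma\in G$; hence $\pi_1(E(J))$ contributes nothing new and the pushout collapses to $G$. (One can make this clean by exhibiting the inverse map $\pi_1(X-\Sigma_J(m))\to G$ using the abelianization $\pi_1(E(J))\to H_1(E(J))=\Z\to\langle\mu_\Sigma\rangle\subset G$ and $\sigma\mapsto1$, which is visibly two-sided inverse to the natural inclusion.) Thus the knot group of $\Sigma_J(m)$ is $G$, via the map induced by inclusion.

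I expect the work here to be bookkeeping rather than ideas: checking that the two inclusions of the gluing torus send $(\ell,\mu)$ to $(\sigma,\mu_J)$ and to $(1,\mu_\Sigma)$ respectively, and that $\tau^m_*$ is conjugation by $\mu_J^{m}$ with the right sign --- all of which is essentially in~\cite{kim:surfaces} and in the proof of Lemma~\ref{branchcover}. It is worth noting that the structure map $\Z\oplus\Z\to G$ is not injective (it kills $\ell$), so van Kampen produces a genuine pushout rather than a classical amalgamated product; this is harmless for the reduction above, but it does mean one cannot shortcut the argument by invoking injectivity of an amalgam.
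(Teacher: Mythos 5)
Your proof is correct, and it takes a genuinely different and arguably more elementary route than the paper's.

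The paper's proof proceeds by first passing to the $d$-fold cyclic cover $(X-\Sigma_J(m))^d$, applying van Kampen there, and then invoking Plotnick's construction of fibered $2$-knots in homotopy $4$-spheres: the key step is that when $(m,d)=1$ one can choose Plotnick's gluing matrix so that the homotopy sphere $\Omega$ is $S^4$, whence the relevant quotient group $\pi_1(S^1\times_{\tilde\tau^m}\cp(J)^d)/\!/\mathrm{im}(\varphi_2)$ is the trivial group $\pi_1(S^4)$. This yields $\pi_1((X-\Sigma_J(m))^d)\cong\pi_1((X-\Sigma)^d)$, after which a comparison of extensions recovers $\pi_1(X-\Sigma_J(m))\cong G$. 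Your approach stays downstairs: you apply van Kampen directly to the decomposition of $X-\Sigma_J(m)$, compute that $\tau^m_*$ is conjugation by $\mu_J^{m}$ (which is the same input the paper needs for the $d$-twist case of Lemma~\ref{branchcover}), set $\sigma=1$, and then observe that $(m,d)=1$ plus $\mu_J^d=\mu_\Sigma^d=1$ forces $\mu_J$ to be central in the image of $\pi_1(E(J))$; since $\mu_J$ normally generates $\pi_1(E(J))$, the image collapses to $\langle\mu_\Sigma\rangle\subset G$, so the pushout is $G$. The Bezout step replaces Plotnick's theorem entirely, and you avoid the extension-comparison step at the end of the paper's argument since you never pass to the cover. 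The tradeoff is that the cover-based van Kampen diagram is reused later (in Lemma~\ref{cobo:fundamentalgp} and Proposition~\ref{prop:cobo}, where the triviality of $\psi_2$ on the $d$-fold cover feeds the torsion computation), so the paper's more machinery-heavy calculation does earn its keep downstream; but as a self-contained proof of this proposition, yours is shorter and avoids a substantial external ingredient. Your closing observation that the structure map $\Z\oplus\Z\to G$ kills $\ell$, so this is a genuine pushout rather than an amalgam, is a good point to make explicit, and your explicit inverse map $\pi_1(X-\Sigma_J(m))\to G$ via abelianization of $\pi_1(E(J))$ is the cleanest way to nail down that the inclusion of $G$ is an isomorphism.
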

\begin{proof} We first note that the Hurewicz homomorphism gives
$$
\pi_1(X-{\Sigma_J(m)})\rightarrow
H_1(X-{\Sigma}_J(m))=H_1(X-{\Sigma})=\Z/d.
$$
We shall show that the fundamental group of the $d$-fold cover
$(X-{\Sigma}_J(m))^d$ of $X-{\Sigma_J(m)}$ is isomorphic to
$\pi_1((X-{\Sigma})^d)$. Identifying a lift $\tilde\alpha$ of $\alpha$ in $d$-fold cover of $X$ branched along $\Sigma$ as $S^1$, we have a decomposition similar to that in~\eqref{branchcover1}:
\begin{equation*}\label{branchcover2}
(X-\Sigma_J(m))^d =(X-\Sigma)^d-S^1\times (B^3-I)\cup_{\partial}
S^1 \times_{\tilde\tau^m} \cp(J)^d
\end{equation*}

The van Kampen Theorem for this decomposition gives the following
diagram
\begin{equation}\label{d-diagram}
\xymatrix@C=-2pc{
&\pi_1((X-\Sigma)^d-S^1\times (B^3-I))Ê\ar[dr]^{\psi_1}&\\
{\pi_1 (S^1\times(\partial{B^3}-\{\mbox{two points}\}))} Ê\ar[ur]^{\varphi_1} \ar[dr]^{\varphi_2}
&& \pi_1((X - \Sigma_{J}(m))^d)\\
& \pi_1(S^1\times_{\tilde\tau^m} \cp(J)^d)Ê\ar[ur]^{\psi_2}&
}
\end{equation}
Consider first the map
$$\varphi_1: \pi_1
(S^1\times(\partial{B^3}-\{\mbox{two points} Ê \}))\to
\pi_1((X-\Sigma)^d-S^1\times (B^3-I)).$$ Note that $\pi_1
(S^1\times(\partial{B^3}-\{\mbox{two points} Ê \}))$ is $\Z^2$
generated by $[S^1]$ and $[\mu]$; we claim that their images under $\varphi_1$ are
trivial. To see this, note that $\varphi_1([\mu])\in
\pi_1((X-{\Sigma})^d)$ is the meridian $[\mu_{\tilde\Sigma}]$ of the
lifted surface of $\Sigma$ and so it projects to
$[\mu_{\Sigma}^d]\in \pi_1(X-\Sigma)$, which is trivial. Similarly,
$\varphi_1([S^1])$ is sent to $[\alpha]\in \pi_1(X-\Sigma)$ and so
it is trivial as well.

Now, consider
\begin{equation}\label{unbranchcover2}
 \pi_1(S^1\times_{\tilde\tau^m}
\cp(J)^d)/im(\varphi_2)\cong \la\pi_1(S^1\times_{\tilde\tau^m}
(S^3,J)^d)\mid [S^1]=1\ra .
\end{equation}

In Plotnick's paper ~\cite{plotnick:fibered}, he constructed a
knotted 2-sphere $A(J)$ in a homotopy $4$-sphere $\Omega$ that is
given by
$$(\Omega,A(J))=  P\cup_{A} S^1\times E(J)$$
where $P$ is a plumbing of two copies of $S^2\times D^2$ and $A$ is
expressed by a matrix form according to a certain basis
$\{e_1,e_2,e_3\}$ on $H_1 (\partial P)$. When our assumption $(m,d)=
1$ is given, there are $\gamma$ and $\beta$ such that
$d\gamma+m\beta=1$. If we perform Plotnick's construction along the
following gluing map

\begin{equation*}\label{matrix5}
   A=\begin{pmatrix}
      m&d&0\\
      -\gamma&\beta&0\\
      0&0&1
\end{pmatrix}
\end{equation*}then
the induced homotopy sphere $\Omega$ is smoothly $S^4$ and so we get
a knotted 2-sphere $A(J)$ in $S^4$. Moreover, the complement of
$A(J)$ in $S^4$ is fibred over $S^1$. In fact, the fiber is the
$d$-fold branched cover of $J$ and the monodromy is $\tilde\tau^m$
described in ~\eqref{lifttau}. So, we observe that the presentation
~\eqref{unbranchcover2} is the knot group of $A(J)$ in $S^4$ with
the relation $[S^1]=1$, which is indeed the meridian of $A(J)$ in
Plotnick's construction. So, $\pi_1(S^1\times_{\tilde\tau^m}
\cp(J)^d)//im(\varphi_2)$ is trivial. This shows
$$\pi_1((X - \Sigma_{J}(m))^d)\cong\pi_1((X-\Sigma)^d-S^1\times (B^3-I))\cong\pi_1((X-\Sigma)^d).$$

Now we need to check that this implies that $\pi_1(X -
\Sigma_{J}(m))\cong G$. Denoting $\pi_1((X-\Sigma)^d)$ by $H$, if $(m,d)=1$ then we have an short exact
sequence
\begin{equation}\label{hurewicz2}
 0\rightarrow H\rightarrow
\pi_1(X-\Sigma_J(m))\stackrel{hurew.}\rightarrow\Z/d\rightarrow 0.
\end{equation}
We are assuming that the meridian of $\Sigma$ has order $d$ in $\pi_1(X -\Sigma)$.  It is easy to check that this still holds when we remove the rim torus $\alpha \times \mu_{\Sigma}$, so that the meridian $\mu_{\Sigma_J(m)}$ has order $d$ as well.  Thus the sequence~\eqref{hurewicz2} splits, and the action of $\Z/d$ is again by conjugation by $\mu_{\Sigma_J(m)}$ on $H$.  Keeping track of this conjugation in the isomorphism described above shows that the sequences~\eqref{hurewicz} and~\eqref{hurewicz2} yield the same extension.
\end{proof}
Iterating this construction, we obtain
\begin{corollary}\label{iteratedtwist}
Consider a surface $\Sigma \subset X$ with $\pi_1(X - \Sigma) \cong \Z/m_1$, and a sequence of integers $m_2,\ldots,m_n$ such that $(m_1,m_i) = 1$ for all $i>1$.  Then the knot group of $\Sigma_{K_1,\ldots,K_n}(m_1,\ldots,m_n)$ is isomorphic to that of $\Sigma_{K_1}(m_1)$.
\end{corollary}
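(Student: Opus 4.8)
The plan is to argue by induction on $n$, using Proposition~\ref{fundamentalgp} as the inductive step; the only real content is to check that the hypotheses of that proposition are regenerated after each surgery, so that it can be applied repeatedly.

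First I would fix notation, writing $\Sigma^{(1)} = \Sigma_{K_1}(m_1)$ and, for $1 < i \le n$, letting $\Sigma^{(i)}$ be the result of $m_i$-twist rim surgery along $K_i$ on the rim torus $\alpha_i\times\mu_{\Sigma^{(i-1)}}$, so that $\Sigma^{(n)} = \Sigma_{K_1,\ldots,K_n}(m_1,\ldots,m_n)$. By the discussion preceding the statement I may assume the curves $\alpha_i$ are all parallel on $\Sigma$ with nullhomotopic pushoffs, and this property persists under each surgery. For the base case $i=1$: since $\pi_1(X-\Sigma) = \Z/m_1$, the surgery producing $\Sigma^{(1)}$ is the $d$-twist rim surgery of Lemma~\ref{branchcover} with $d = m_1$; from its proof I get that $H_1(X-\Sigma^{(1)}) = \Z/m_1$ and that the meridian $\mu_{\Sigma^{(1)}}$ has order $m_1$ in $\pi_1(X-\Sigma^{(1)})$, because removing the neighbourhood $S^1\times(B^3-I)$ does not change $\pi_1$ of the complement.

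For the inductive step, suppose $2 \le i \le n$ and that $H_1(X-\Sigma^{(i-1)}) = \Z/m_1$ with $\mu_{\Sigma^{(i-1)}}$ of order $m_1$ in $\pi_1(X-\Sigma^{(i-1)})$. Since $(m_1,m_i) = 1$, Proposition~\ref{fundamentalgp} applies to $\Sigma^{(i-1)}$ (with $d = m_1$, $m = m_i$, $J = K_i$), giving $\pi_1(X-\Sigma^{(i)}) \cong \pi_1(X-\Sigma^{(i-1)})$. I then need to observe that the hypotheses are reproduced for the next step: the proof of that proposition shows $H_1(X-\Sigma^{(i)}) = H_1(X-\Sigma^{(i-1)}) = \Z/m_1$, and that $\mu_{\Sigma^{(i)}}$ again has order $m_1$ — this is precisely the remark there that removing the rim torus $\alpha_i\times\mu_{\Sigma^{(i-1)}}$ leaves the order of the meridian unchanged. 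Iterating from $i=2$ to $i=n$ then yields $\pi_1(X-\Sigma^{(n)}) \cong \pi_1(X-\Sigma^{(1)})$, which is the assertion.

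The substantive part of all this is already contained in Proposition~\ref{fundamentalgp} (via Plotnick's fibered-knot construction); here the only thing requiring care is the bookkeeping, namely confirming that a twisted rim surgery along a parallel curve with nullhomotopic pushoff preserves both $H_1$ of the complement and the order of the meridian, so that the proposition can be reapplied. Because $d = m_1$ is held fixed throughout and $(m_1,m_i)=1$ for every $i>1$, the surgeries along the distinct $K_i$ do not interact and no new difficulty arises in the iteration. If one wanted the conclusion at the level of extensions rather than abstract groups, one would additionally track the conjugation action by the meridian through each isomorphism, exactly as at the end of the proof of Proposition~\ref{fundamentalgp}.
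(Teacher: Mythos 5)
Your argument is correct and is exactly the iteration the paper has in mind; the paper omits the bookkeeping ("Iterating this construction, we obtain...") and you supply it: the base case follows from Lemma~\ref{branchcover} (meridian of order $m_1$, $H_1 = \Z/m_1$), and the inductive step is Proposition~\ref{fundamentalgp}, whose proof already records that the order of the meridian and the first homology are preserved so the hypotheses regenerate.
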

We remark that iterated twisted rim surgery can be done in a single operation, as follows.  Suppose that $K_1,\ldots,K_n$ are knots in $S^3$, and that integers $m_1,\ldots,m_n$ are given. Then the exterior $E(K_1\# \cdots \#K_n)$ is contains the exteriors $E(K_i)$ in a standard way, bounded by incompressible tori.  Performing the twist maps $\tau^{m_i}$ along these tori gives a diffeomorphism $\mathcal{T}$ of $E(K_1\# \cdots \#K_n)$ which gives rise to a new surface knot $(X,\Sigma_{K_1,\ldots,K_n}(\mathcal{T}))$ as in~\eqref{twistrimsurgery}.  This is the same as doing $m_i$-twist rim surgeries along the knots $K_i$ in any sequence.
\subsection{$1$-twist rim surgery}\label{1twistsec}
A simpler variation of the previous argument shows that $1$-twist rim surgery often preserves the surface knot group.

\begin{proposition}\label{1twistfundagp}
Suppose that $\alpha \subset \Sigma$ is an embedded curve that has a null-homotopic pushoff into $X - \Sigma$.  Then for any knot $K$, the surface $\Sigma_K(1)$ obtained by $1$-twist surgery along the rim torus parallel to $\alpha$ has $\pi_1(X - \Sigma_K(1)) \cong \pi_1(X - \Sigma).$
\end{proposition}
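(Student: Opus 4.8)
\emph{Proof plan.} The approach is to run van Kampen's theorem on the decomposition of $X-\Sigma_K(1)$ coming from~\eqref{twistrimsurgery}, just as in the proofs of Lemma~\ref{branchcover} and Proposition~\ref{fundamentalgp}, but now without passing to any cover: when $m=1$ the twist is mild enough that the hypothesis on $\alpha$ does all the work directly. Write
$$X-\Sigma_K(1)=A\cup B,\qquad A=(X-\Sigma)-S^1\times(B^3-I),\qquad B=S^1\times_{\tau}\cp(K),$$
glued along $A\cap B=S^1\times(\partial B^3-\{\text{two points}\})$, whose fundamental group is $\Z^2=\langle[S^1],[\mu]\rangle$.

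First I would record the three fundamental groups involved. As already used in the proof of Lemma~\ref{branchcover}, the inclusion $A\hookrightarrow X-\Sigma$ induces an isomorphism on $\pi_1$, so $\pi_1(A)\cong\pi_1(X-\Sigma)$. Since $\pi_1(\cp(K))=\pi_1(E(K))$ and $\tau$ is the identity near $\partial E(K)$, the mapping torus $B=S^1\times_\tau\cp(K)$ has $\pi_1(B)=\pi_1(E(K))\rtimes_{\tau_*}\Z=\langle\pi_1(E(K)),t\mid t\beta t^{-1}=\tau_*(\beta)\rangle$. Next I would track the two standard generators of $\pi_1(A\cap B)$: into $\pi_1(A)=\pi_1(X-\Sigma)$ one has $[S^1]\mapsto[\alpha]$ and $[\mu]\mapsto\mu_{\Sigma}$, while into $\pi_1(B)$ one has $[S^1]\mapsto t$ and $[\mu]\mapsto\mu_K$, a meridian of $K$. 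Van Kampen then presents $\pi_1(X-\Sigma_K(1))$ as $\pi_1(X-\Sigma)*\pi_1(B)$ modulo the two relations $[\alpha]=t$ and $\mu_{\Sigma}=\mu_K$.

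Now the hypothesis enters. The pushoff of $\alpha$ is null-homotopic, so $[\alpha]=1$ in $\pi_1(X-\Sigma)$ and the relation $[\alpha]=t$ kills $t$; imposing $t=1$ on $\pi_1(B)$ leaves $\pi_1(E(K))$ modulo the relations $\beta=\tau_*(\beta)$. At this point I would quote the computation already made in the proof of Lemma~\ref{branchcover} (following~\cite{kim:surfaces}): the twist map of~\eqref{tau} satisfies $\tau_*(\beta)=\mu_K^{-1}\beta\mu_K$ for every $\beta\in\pi_1(E(K))$. Hence the surviving relations assert exactly that $\mu_K$ is central in $\pi_1(E(K))$; since a knot group is normally generated by any single meridian, the resulting quotient is generated by $\mu_K$, hence cyclic, and as it still surjects onto $H_1(E(K))=\Z$ it must be infinite cyclic. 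Thus the entire contribution of $B$ collapses to $\langle\mu_K\rangle\cong\Z$, the remaining relation $\mu_{\Sigma}=\mu_K$ simply identifies this with $\langle\mu_{\Sigma}\rangle\subset\pi_1(X-\Sigma)$, and the relation carried by $[S^1]$ has become vacuous. Adjoining to a group a new generator equated with an existing element changes nothing, so $\pi_1(X-\Sigma_K(1))\cong\pi_1(X-\Sigma)$.

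The step I expect to need the most care is this final collapse: one must be sure that the relations $\beta=\tau_*(\beta)$ range only over $\pi_1(E(K))$ and are therefore exhausted once that free factor has been reduced to $\langle\mu_K\rangle$, so that no relation leaks into the $\pi_1(X-\Sigma)$ factor; and one should check that the quotient of the knot group by ``$\mu_K$ central'' is genuinely $\Z$ and not a finite cyclic group, which holds because the abelianization $H_1(E(K))=\Z$ factors through it. The other ingredients---the images of $[S^1]$ and $[\mu]$ on the two sides, the formula $\tau_*(\beta)=\mu_K^{-1}\beta\mu_K$, and the isomorphism $\pi_1(A)\cong\pi_1(X-\Sigma)$---are all exactly as in the proofs of Lemma~\ref{branchcover} and Proposition~\ref{fundamentalgp}, which is what makes this ``a simpler variation of the previous argument.''
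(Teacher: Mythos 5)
Your proposal follows the paper's proof exactly: the same van Kampen decomposition $X-\Sigma_K(1)=A\cup B$ with $A=(X-\Sigma)-S^1\times(B^3-I)$ and $B=S^1\times_\tau\cp(K)$, the same tracking of $[S^1]\mapsto[\alpha]=1$ and $[\mu]\mapsto\mu_\Sigma$ (resp.\ $\mu_K$), and the same use of $\tau_*(\beta)=\mu_K^{-1}\beta\mu_K$. Your only addition is spelling out the final simplification (that the knot group with a central meridian is infinite cyclic generated by that meridian, so the $B$-contribution collapses), which the paper leaves implicit when it asserts that the presentation "is isomorphic to $\pi_1(X-\Sigma)$" — a useful and correct clarification, not a different route.
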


\begin{proof}
The van Kampen theorem for the decomposition $X-\Sigma_K(1)$ shows the following diagram;
$$
\xymatrix@C=-2pc{
&\pi_1(X-\Sigma-S^1\times (B^3-I))Ê\ar[dr]^{\psi_1}&\\
{\pi_1 (S^1\times(\partial{B^3}-\{\mbox{two points}\}))} Ê\ar[ur]^{\varphi_1} \ar[dr]^{\varphi_2}
&&  \pi_1(X - \Sigma_{K}(1))\\
& \pi_1(S^1\times_{\tau}(\cp(K)) \ar[ur]^{\psi_2}&
}
$$

In the diagram, consider the generators $[S^1]$ and $[\mu]$ in $\pi_1 (S^1\times(\partial{B^3}-\{\mbox{two points} Ê
\}))$. Note that $\varphi_1[\mu]$ is the meridian $[\mu_\Sigma]$ in $\pi_1(X-\Sigma-S^1\times (B^3-I))\cong\pi_1(X-\Sigma)$ and $\varphi_1[S^1]$ is $[\alpha]$ which is trivial. So, the presentation for $\pi_1(X - \Sigma_{K}(1))$ is
\begin{equation*}\label{1twistfundgpeq}
 \la \pi_1(X-\Sigma) *\pi_1(S^1\times_{\tau}
(\cp(K))) \mid [S^1]=1, \mu_\Sigma=\mu_K, \mu_K^{-1}\beta\mu_K=\beta\  , \forall\beta\in\pi_1(\cp(K))\ra ,
\end{equation*}
that is isomorphic to $\pi_1(X-\Sigma)$.
\end{proof}
In section~\ref{sec:1twistscob}, we will show that $1$-twist rim surgery does not change the $s$-cobordism class of the knot $\Sigma$, and hence the resulting surface is topologically unknotted for good fundamental groups.   In that section, we will make use of the observation, referring to the decomposition above, that the image of $\pi_1(S^1\times_{\tau}(\cp(K))$ in $ \pi_1(X - \Sigma_{K}(1))$ is the cyclic subgroup generated by the meridian of $ \Sigma_{K}(1)$.

In Section~\ref{smoothsec} we will use this result to get infinitely many smoothly knotted surfaces with a given fundamental group.

\section{Symplectic tori with arbitrary knot group}\label{symplecticsec}
In this section, we discuss the question of when a given finitely presented group $G$ is the fundamental group of $X - S$, where $X$ is a simply-connected symplectic $4$-manifold, and $S$ is a symplectic surface.  Note that $S$ being symplectic implies that  $[S]$  is non-trivial in $H_2(X;\R)$, which in turn implies that $H_1(X-S;\Z)$ is finite, in fact isomorphic to $\Z/d$ where $d$ is the divisibility of $[S] \in H_2(X;\Z)$. Thus, the following conditions are necessary for $G $ to be isomorphic to $\pi_1(X - S)$:
\begin{equation}\tag{$K_d$}\label{Kd}
H_1(G) = \Z/d \ \text{for some $d$, and}\  \exists\, \gamma\in G\ \text{such\ that}\  G/\langle \gamma \rangle = \{1\}.
\end{equation}
We show that these topological conditions are sufficient for the existence of a symplectic surface.  The technique is a relative version of Gompf's construction of symplectic manifolds with arbitrary fundamental group.  We have not made any effort to be efficient in this construction, with regard to keeping $\chi(M)$ or $\sigma(M)$ (or some combination thereof) small. It seems likely that this could be achieved for specific groups, following~\cite{baldridge-kirk:symplectic-pi1}.
\begin{theorem}\label{sympd}
If $G$ satisfies conditions~\eqref{Kd}, then there is a simply-connected symplectic $4$-manifold $M$ containing a symplectically embedded surface $S$ with $\pi_1(M-S) \cong G$.
\end{theorem}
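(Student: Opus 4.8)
The plan is to reduce the theorem to a single assertion about surface complements and then to prove that by a relative version of Gompf's construction of symplectic $4$-manifolds with prescribed fundamental group.

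\smallskip
\noindent\emph{Reduction.}
It suffices to build a symplectic $4$-manifold $N$ containing a symplectically embedded connected surface $S$ with $\pi_1(N-S)\cong G$ under an isomorphism sending the meridian $\mu_S$ to $\gamma$. Indeed, filling in the normal $D^2$-bundle of $S$ kills $\mu_S$, so $\pi_1(N)\cong\pi_1(N-S)/\langle\langle\mu_S\rangle\rangle\cong G/\langle\langle\gamma\rangle\rangle=\{1\}$ by~\eqref{Kd}; thus $N$ is simply connected. Then $H_1(N-S)\cong H_1(G)=\Z/d$, the class $[\mu_S]$ has order $d$ in it (because $\gamma$ normally generates $G$ and hence generates $H_1(G)$), and for a simply-connected ambient manifold this order equals the divisibility of $[S]\in H_2(N;\Z)$. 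So $M:=N$ does the job, $S$ being automatically symplectic of the required divisibility.

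\smallskip
\noindent\emph{The model pair.}
Fix a finite presentation $G=\langle x_1,\dots,x_k\mid r_1,\dots,r_l\rangle$ with $x_1=\gamma$. Carrying out Gompf's realization scheme \emph{relative} to a surface, I would first construct a symplectic pair $(N_0,S_0)$ --- with $S_0$ connected and symplectically embedded --- together with pairwise disjoint, square-zero, symplectically embedded tori $\Lambda_1,\dots,\Lambda_l\subset N_0-S_0$ such that: (i) $\pi_1(N_0-S_0)$ admits a surjection onto $G$ whose kernel is the normal closure of $\{r_1,\dots,r_l\}$ and which sends $\mu_{S_0}\mapsto\gamma$; and (ii) for each $j$, one of the two circle generators of $\pi_1(\Lambda_j)\cong\Z^2$ represents the conjugacy class of $r_j$, the other represents $1$, and the meridian of $\Lambda_j$ is null-homotopic in the complement of $S_0\cup\bigcup_i\Lambda_i$. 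Concretely I would take $(N_0,S_0)$ to be a small symplectic building block whose surface complement already carries the required $\Z/d$ in homology with $\mu_{S_0}$ of infinite order in its fundamental group, and then perform a sequence of symplectic sums with standard pieces --- one per remaining generator, as in Gompf's construction of symplectic manifolds with free $\pi_1$ --- entirely inside $N_0-S_0$, leaving a symplectic neighbourhood of $S_0$ untouched. The relators $r_j$ would then be pushed onto the tori $\Lambda_j$ using the standard fact that any homotopy class of loops in a symplectic $4$-manifold is carried by an embedded square-zero symplectic torus after an auxiliary local symplectic sum, again performed disjointly from $S_0$ and from the previously chosen $\Lambda_i$. (For efficiency one could instead use Luttinger surgery, as in~\cite{baldridge-kirk:symplectic-pi1}, but this is not needed here.)

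\smallskip
\noindent\emph{Imposing the relators.}
To finish, I would take, for each $j$, the symplectic fiber sum of $N_0$ with a copy of $E(1)=\mathbf{CP}^2\#9\,\overline{\mathbf{CP}^2}$ along $\Lambda_j$ and a generic elliptic fiber. Since the complement of a fiber in $E(1)$ is simply connected, van Kampen's theorem shows that this fiber sum replaces $\pi_1$ of the surface complement by its quotient by the normal closure of the image of $\pi_1(\partial\nu(\Lambda_j))\cong\Z^3$, which by (ii) is exactly $\langle\langle r_j\rangle\rangle$; the surface $S_0$, the other tori, and their symplectic structures survive unchanged, and the symplectic fiber sum preserves symplecticity. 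Performing all $l$ of these (in any order, the $\Lambda_j$ being disjoint) yields a symplectic pair $(N,S)$ with $\pi_1(N-S)\cong G$; tracking $\mu_{S_0}$ through the van Kampen computations gives $\mu_S\mapsto\gamma$. By the reduction, this proves the theorem with $M=N$.

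\smallskip
\noindent\emph{The main obstacle.}
The delicate step is the construction of the model pair: a surface complement with \emph{free} fundamental group must have $[\mu_{S_0}]=0$ in homology --- because the meridian of any symplectic surface is a torsion class there --- and hence forces $d=1$, so for $d>1$ one must engineer $\pi_1(N_0-S_0)$ to carry precisely a $\Z/d$ detected by $\mu_{S_0}$ while keeping $\mu_{S_0}$ of infinite order in the group, and then verify that every generator- and relator-realizing symplectic sum can be carried out in the complement of $S_0$ without disturbing $S_0$ or introducing unwanted relations. Everything else is a lengthy but routine iterated application of van Kampen's theorem together with the standard properties of symplectic fiber sums along square-zero symplectic tori.
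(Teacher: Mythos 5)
Your general strategy --- realize $G$ by a relative version of Gompf's construction, kill relators by symplectic fiber sums with $E(1)$ along square-zero symplectic tori in the surface complement, and appeal to van Kampen to track $\pi_1$ --- is exactly the one the paper uses. The issue is that you have not actually proved the theorem: you have reduced it to the existence of a ``model pair'' $(N_0,S_0)$ and then, in your final paragraph, explicitly flagged that producing this model pair when $d>1$ is the obstacle, without resolving it. That unresolved step is not a routine lemma; it is the main technical content of the paper's proof.

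Concretely, the paper produces the starting symplectic pair as follows. From $H_1(G)=\Z/d$ one gets $\gamma^d=\prod_{i=1}^n[a_i,b_i]$; one builds a surface $\Sigma$ by gluing a genus-$(l+n)$ subsurface $\Sigma_1$ to a $d$-punctured disc $P$ and then capping off. Take $X=T^2\times\Sigma$ with the product symplectic form and let $S_d$ be the product of the circle $\alpha$ with a $(d,1)$ torus braid in $\beta\times D^2$ hitting the centers of the $d$ punctures. The point is that $X - S_d$ is then $\alpha\times(S^1\times_\rho\Sigma_d)$, where $\rho$ cyclically permutes the $d$ punctures; this mapping-torus description yields an explicit presentation of $\pi_1(X_d)$ in which the boundary circles $\gamma_1,\dots,\gamma_d$ are cyclically conjugated by $\beta$ and their product is a product of commutators. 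After fiber sums with $E(1)$ killing $\alpha$, $\beta$, the auxiliary curves, and a curve representing $\gamma_1^{-1}w$, the relation $\prod[v_j,w_j]=\gamma_1^d$ falls out and one lands on the given presentation of $G$, with the meridian mapping to $\gamma=\gamma_1$. Without some such construction, the claim that ``a small symplectic building block'' carrying $\Z/d$ in homology with $\mu_{S_0}$ of infinite order in $\pi_1$ exists, in a form compatible with all the subsequent fiber sums, is precisely what needs proof; your proposal identifies the right target but leaves it as an assertion. Everything else in your outline (the reduction via $G/\langle\langle\gamma\rangle\rangle=1$, the role of the $E(1)$ fiber sums, the observation that simply-connectedness of the fiber complement makes van Kampen work cleanly) matches the paper.
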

\begin{proof}
Take a finite presentation of $G$:
$$
G = \la x_1,\ldots,x_l\, | \, r_1,\ldots r_m\ra
$$
Since $H_1(G) \cong \Z/d$, there are elements $a_i,\ b_i \in G$ with  $\gamma^d = \prod_i^n [a_i,b_i]$, where $\gamma$ is the group element in~\eqref{Kd}. Write $a_i$ and $b_i$ as words in the generators $x_j$, symbolically $a_i=v_i(x_1,\ldots,x_l)$ and $b_i = w_i(x_1,\ldots,x_l)$.   Similarly, write $\gamma$ as a word $w(x_1,\ldots,x_l)$.   By construction, the equation $w^d =  \prod_i^n [v_i,w_i]$ in the free group generated by the $\{x_i\}$  is a consequence of the relations $\{r_j\}$.

Consider a surface $\Sigma_1$ of genus $l +n$ with one boundary component $\eta'$, containing a standard symplectic basis of curves $\{x_1,y_1,\ldots,x_l,y_l,a_1,b_1,\ldots,a_n,b_n\}$.  Let $P$ denote a $d$-punctured disc $D^2 - (\delta_1\cup\cdots\cup\delta_d)$, with boundary $\partial P = \eta \bigcup \cup_i \partial (\delta_i)$.    Write $\Sigma_d = \Sigma_1 \cup_{\eta = \eta'} P$, and $\Sigma$ for $\Sigma_d$ with the disks $\delta_i$ glued back in.  This is illustrated, for $d=2$, in Figure~\ref{sigma} below.
%\begin{figure}[!h]
%\centering
%\psfrag{x1}{$x_1$}
%\psfrag{y1}{$y_1$}
%\psfrag{x2}{$x_2$}
%\psfrag{y2}{$y_2$}
%\psfrag{a1}{$a_1$}
%\psfrag{b1}{$b_1$}
%\psfrag{s1}{$\Sigma_1$}
%\psfrag{sd}{$\Sigma_d$}
%\psfrag{eta}{$\eta$}
%\psfrag{etap}{$\eta'$}
%\psfrag{g1}{$\gamma_1$}
%\psfrag{g2}{$\gamma_2$}
%\psfrag{rho}{$\rho$}
%\psfrag{p}{$P$}
%\hspace{1.4in}\includegraphics[scale=.9]{sigma.eps}
%\caption{}
%%\caption{Pillowcase}
%\label{sigma}
%\end{figure}
\begin{figure}[!ht]
\centering
\hspace{1.4in}\includegraphics[scale=.6]{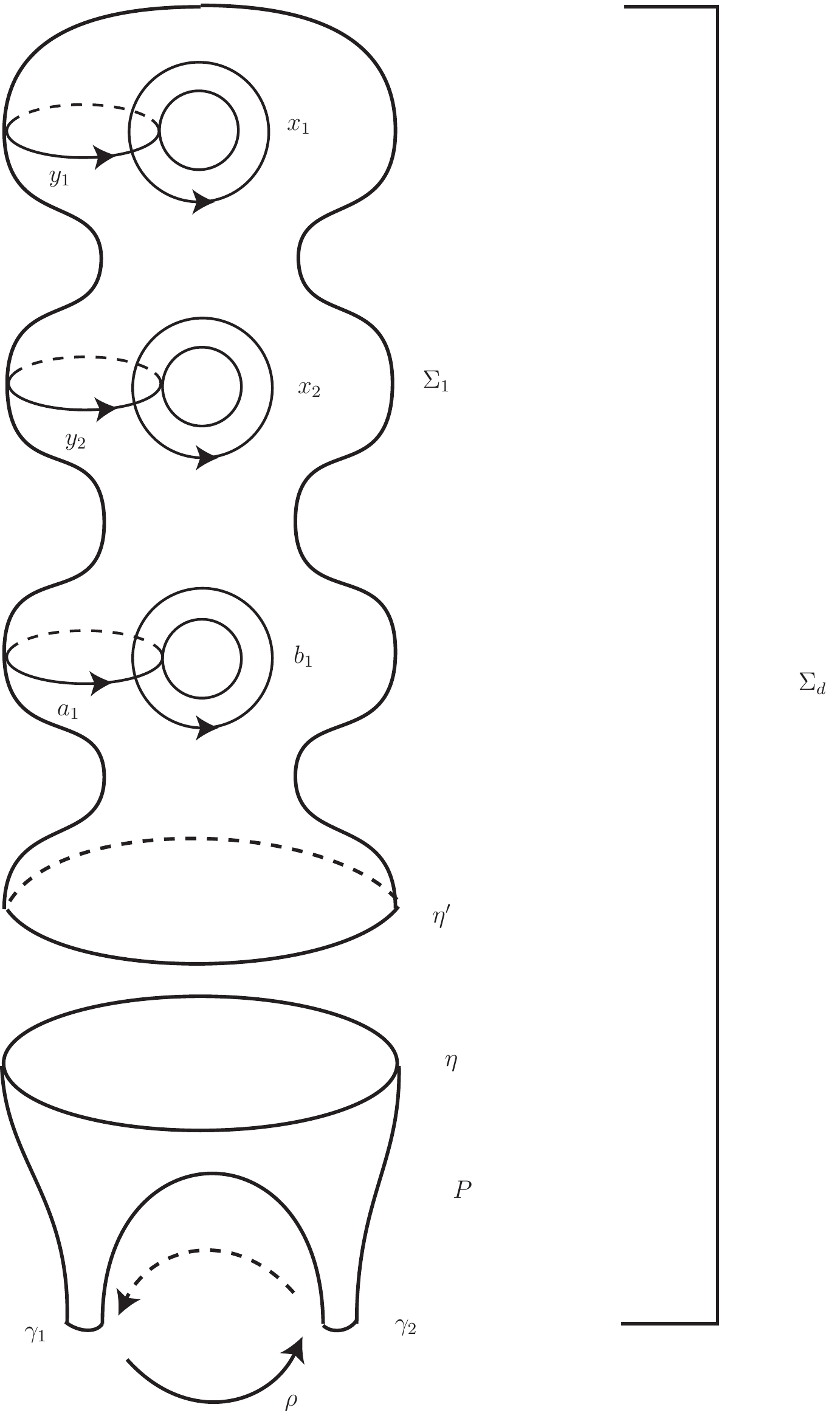}
\caption{}
\label{sigma}
\end{figure}

The disks $\delta_j$ should be cyclically arranged around a circle in $\text{int}(D)$ as shown below in Figure~\ref{planar}.   The boundaries of the $\delta_j$, oriented counterclockwise, together with the indicated base paths, will be denoted $\gamma_j \in \pi_1(P,1)$.   With the given orientations and base paths, $\eta =\gamma_d \cdot \gamma_{d-1} \cdots \gamma_1$.  Consider a diffeomorphism $\rho:D \to D$ that is the identity on $\partial D$, and permutes the $\delta_j$ cyclically.
The effect of $\rho$ on $\pi_1(P,1)$ is given by $\rho_*(\gamma_1) = \gamma_2$, \ldots, $\rho_*(\gamma_{d-1}) = \gamma_d$, and $\rho_*(\gamma_d) = \eta\gamma_1\eta^{-1}$.  Extend $\rho$ by the identity on $\Sigma_1$ so that it becomes a diffeomorphism on $\Sigma_d$.
\begin{figure}[!h]
\centering
\includegraphics[scale=.70]{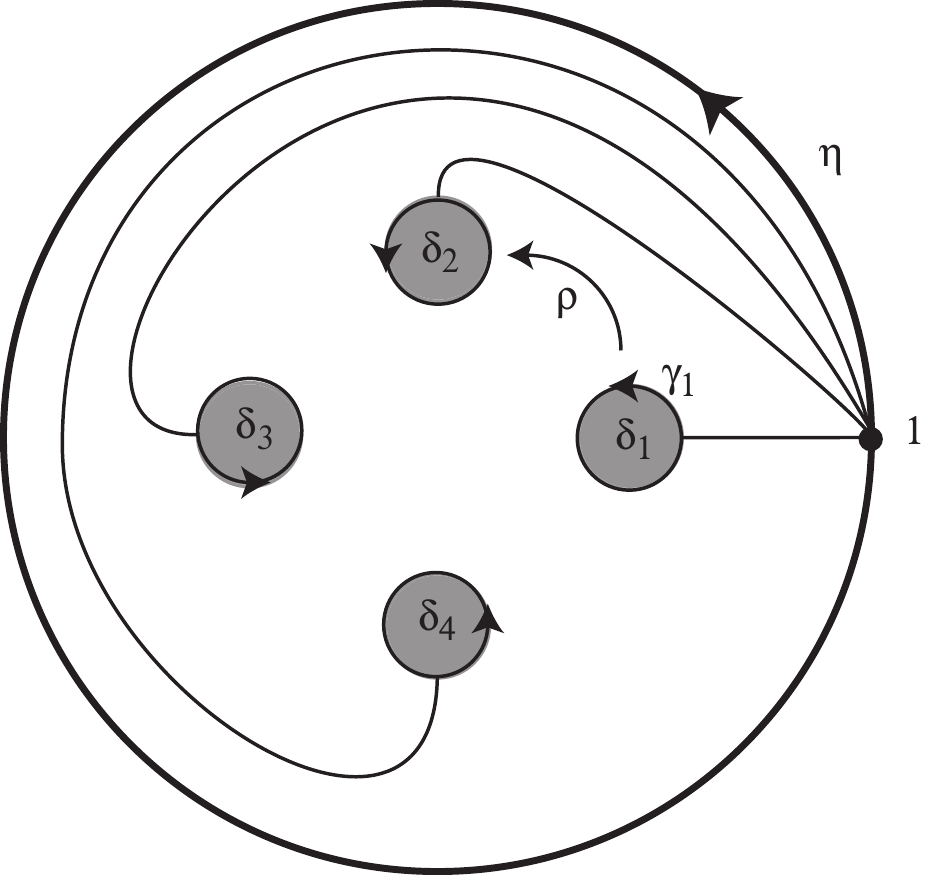}
\caption{}
%\caption{Pillowcase}
\label{planar}
\end{figure}

Form the manifold $X = S^1 \times S^1 \times \Sigma$, with a product symplectic structure.  We will call the first circle factor $\alpha$ and the second one $\beta$ and write $T = \alpha \times \beta$.  Note that the surface $T^2 \times 0$ where $0$ is the center of $D^2$ is a symplectic submanifold of $X$. In the solid torus $\beta \times D^2$ there is a braid that forms a $(d,1)$ torus knot meeting $D$ in the centers of the disks $\delta_j$. Let $S_d$ be the product of $\alpha$ with this braid; it is straightforward to check that the symplectic structure on $X$ can be arranged so that $S_d$ is symplectic.

The complement $X_d$ of $S_d$ is equal to the product of the circle $\alpha$ with the mapping torus $S^1 \times_\rho \Sigma_d$; we will identify the circles transverse to $\Sigma_1 \subset \Sigma_d$ with the $\beta$ circles.   Another way to observe the embedding of $X_d$ in $X$ is to note that if $\rho$ is extended over the disks $\delta_j$, then it is actually isotopic to the identity map of $\Sigma$, and so the mapping torus becomes a product manifold.

The fundamental group of $X_d$ is generated by $\alpha, \beta$ and
$$
\{x_i,y_i,a_j,b_j,\gamma_k\}\ \text{for}\ \,\ i=1 \ldots l,\ j=1\ldots n,\ k=1\ldots d.
$$
with relations that $\alpha$ commutes with everything, $\beta$ commutes with $x_i,y_i,a_j$ and $b_j$,
\begin{align}
\gamma_k^\beta & = \gamma_{k+1} \ \text{for}\ k=1\ldots d-1\\
 \gamma_d^\beta &= \gamma_{1}^\eta, \ \text{and}\\
\label{comm}
\prod_{i=1}^l [x_i,y_i] \prod_{j=1}^n [a_j,b_j] & =\gamma_d \cdot \gamma_{d-1} \cdots \gamma_1
\end{align}

Choose, for $j\ge 1$, immersed curves $\epsilon_j \subset \Sigma_1$ for  representing the homotopy classes (in $\Sigma_1$) of
\begin{equation}\label{rels}
y_1,\ldots,y_l,r_1,\ldots,r_m,\  \text{and}\ a_1^{-1}v_1,\ldots,a_n^{-1}v_n,b_1^{-1}w_1,b_n^{-1}w_n.
\end{equation}
and a curve $\epsilon_0$ in $\Sigma_d$ representing $\gamma_1^{-1}w$.    Following
Gompf~\cite{gompf:symplectic}, replace $\Sigma_d$ by its connected sum with many copies of $T^2$ and the $\epsilon_j$ by their connected sum with curves running over these tori so that $\alpha \times \epsilon_j$ can be arranged to be embedded and symplectic.    (The collection of curves $\{\epsilon_j\}$ has been enlarged in this process to include the generators $\pi_1$ of each torus added on.)   Note that the connected sums can all be arranged to take place in $\Sigma_1 \subset \Sigma_d$, and so the diffeomorphism $\rho$ extends to the new $\Sigma_d$.

Now do the symplectic sum of $X$ with copies of the elliptic surface $E(1)$, where a fiber $F$ of $E(1)$ is identified with the each of the tori $\alpha \times \epsilon_j$.  Do one further symplectic sum where $F$ is identified with a copy of the torus $\alpha \times \beta$.   Write $M$ for the result of all of these fiber sums with $X$, with $M_d$, the complement of $S_d$, being the same fiber sums with $X_d$.

Since the fiber in $E(1)$ has simply-connected complement,  van Kampen's theorem implies that each fiber sum kills the precisely elements of the fundamental group of the torus in $X$ or $X_d$.   Let us compute the fundamental group of $X_d$.   Note that after relations killing the fundamental group elements $\alpha$, $\beta$ and those listed in~\eqref{rels} are imposed, then only the generators
$\{\gamma_1,x_1,\ldots,x_l\}$ are needed, and the relation~\eqref{comm} reduces to
$$
 \prod_{j=1}^n [v_j,w_j]  =\gamma_1^d.
$$
Doing the final fiber sum along $\alpha \times \epsilon_0$ makes $\gamma_1$ a word in the $x_i$, with this relation automatically satisfied.   Thus $\pi_1(M_d)$ is generated by the $\{x_i\}$ with relations $\{ r \}$, and is thus isomorphic to $G$.  The fundamental group of $M$ is trivial, because we kill the element $\gamma=\gamma_1$, which by hypothesis normally generates $G$.
\end{proof}
There are two special property of the surface constructed in the above proof: it is a torus and has $0$-self-intersection.  It is straightforward, when $d=1$, to modify the construction to produce surfaces of arbitrary positive genus $g$.  In that case, instead of taking a product with a torus to form $X_1$, simply take the product with a surface $F_g$ of genus $g$.   To kill the extra fundamental group introduced in this way, one needs to take a symplectic sum of $X_1$ along a $F_g$ with a symplectic manifold $Y$ containing a copy of $F_g$ with simply-connected complement; such are easily found.  It seems a little more difficult to find such surfaces for $d>1$.

Finding a symplectic sphere has a rather different character.   The fundamental group of a sphere with non-zero self-intersection $m$ in a simply-connected $4$-manifold satisfies extra relations, because the element $\gamma$ in conditions~\eqref{Kd} satisfies $\gamma^m =1$.  There are certainly many groups that satisfy conditions~\eqref{Kd} but have no elements of finite order (for example $d$-framed surgery on most knots in $S^3$).   So to get a group satisfying conditions~\eqref{Kd} we would want the sphere to have trivial normal bundle.  However, smoothly embedded spheres  with trivial normal bundles are relatively rare in symplectic manifolds, and so we conjecture that there are some groups that simply cannot be realized.

In general, if a surface $\Sigma$ of genus $g$ is embedded in a $4$-manifold with self-intersection $k$, note that its divisibility $d$ must divide $k$.   By considering a pushoff of $\Sigma$, one sees that the  group of $\Sigma$ satisfies conditions~\eqref{Kd} with the extra proviso that the element $\gamma$ may be chosen so that $\gamma^k$ is a product of at most $g$ commutators.   In principle, this places some extra restriction on the group $G$, but this seems hard to work with because there may be many choices for $\gamma$.

If we do not care whether the ambient manifold is symplectic, then it is easy to find embedded surfaces of any genus with arbitrary group satisfying conditions~\eqref{Kd}.
\begin{proposition}\label{sphere}
Let $G$ be a finitely presented group satisfying~\eqref{Kd}.  Then $G = \pi_1(X-S)$ for an embedded $2$-sphere $S$ in a smooth simply-connected $4$-manifold $X$.
\end{proposition}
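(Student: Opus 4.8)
The plan is to construct the pair $(X,S)$ by first choosing a model in which $S$ is a trivially embedded $2$-sphere, and then performing surgery on a collection of circles in the complement in order to install the group $G$, arranged so that those same surgeries kill $\pi_1$ of the ambient manifold. Condition~\eqref{Kd} will enter exactly at the last step, and the meridian of $S$ will play the role of the normally generating element $\gamma$; keeping track of that meridian is what ties the two fundamental-group computations together.

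Fix a finite presentation $G=\la x_1,\dots,x_l\mid r_1,\dots,r_m\ra$, and, using~\eqref{Kd}, write the normally generating element $\gamma$ as a word $w=w(x_1,\dots,x_l)$. Let $X_1=\#_l\,(S^1\times S^3)$ (with $X_1=S^4$ if $l=0$), and let $S\subset B^4\subset X_1$ be a standard unknotted $2$-sphere sitting inside a ball. Since the complement of the unknotted sphere in $S^4$ is $S^1\times D^3$, a van Kampen argument applied to $X_1-\nu(S)=(X_1-\mathrm{int}\,B^4)\cup_{S^3}\big((S^1\times D^3)-\mathrm{int}\,B^4\big)$ shows $\pi_1(X_1)\cong F_l$, free on the $x_i$, while $\pi_1(X_1-S)\cong F_l*\Z=F_{l+1}$, the extra generator being the meridian $\mu$ of $S$; note that $\mu\mapsto 1$ under $\pi_1(X_1-S)\to\pi_1(X_1)$.

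Next I would pick mutually disjoint embedded circles $c_0,c_1,\dots,c_m$ in $X_1-S$, each disjoint from $S$ as well, with $[c_j]=r_j$ in $\pi_1(X_1-S)$ for $j\ge1$ and $[c_0]=\mu w^{-1}$. These are easy to produce: each $r_j$ is a word in the $x_i$ alone, hence is represented by an embedded loop in $X_1-\mathrm{int}\,B^4$, and $\mu w^{-1}$ is represented by a small meridian of $S$ concatenated with such a loop. Every circle in an oriented $4$-manifold has trivial normal bundle, and any framing will serve; perform surgery on each $c_j$. Because the $c_j$ miss $S$, the result is a closed smooth $4$-manifold $X$ still containing $S$ as an embedded $2$-sphere, and $X-S$ is just $X_1-S$ with the corresponding surgeries performed in the complement.

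The argument finishes with the two fundamental-group computations. Deleting a tubular neighborhood of a circle from a $4$-manifold does not change $\pi_1$, and gluing in a surgery piece $D^2\times S^2$ imposes exactly the relation killing the attaching circle, so van Kampen gives
$$
\pi_1(X-S)\ \cong\ F_{l+1}/\la\,\mu w^{-1},\,r_1,\dots,r_m\,\ra\ \cong\ G,
$$
where eliminating $\mu=w$ identifies the quotient with $\la x_i\mid r_j\ra$ and sends the meridian to $\gamma$, while
$$
\pi_1(X)\ \cong\ F_l/\la\,w,\,r_1,\dots,r_m\,\ra\ \cong\ G/\la\gamma\ra\ =\ \{1\}
$$
by~\eqref{Kd}. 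Thus $X$ is a smooth simply-connected $4$-manifold containing the embedded $2$-sphere $S$ with $\pi_1(X-S)\cong G$, and $H_1(X-S)=H_1(G)=\Z/d$ as required. (Running the same argument starting from a trivially embedded surface of genus $g$, and killing by further surgery the finitely many additional generators it contributes to $\pi_1$ of the complement — all of which are trivial in $\pi_1(X_1)$ — gives the analogous statement for arbitrary genus.) I do not expect a genuine obstacle here; the one point that needs care is arranging that the meridian of $S$ becomes the normal generator $\gamma$ — this is the purpose of the circle $c_0$ — after which condition~\eqref{Kd} is precisely what forces $\pi_1(X)=\{1\}$.
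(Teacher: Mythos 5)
Your argument is correct, and it is closely parallel to the paper's, but packaged differently. The paper builds a $4$-dimensional handlebody $Y$ with $1$-handles for the generators $x_i$ and $2$-handles for the relators $r_j$, attaches one more $2$-handle along $\gamma$ to get a simply-connected $Z$, and then takes $X$ to be the double of $Z$ and $S$ to be the double of the cocore disk $\Delta$ of the $\gamma$-handle; the complement $Z-\Delta$ retracts onto $Y$, and van Kampen for the double plus the surjectivity of $\pi_1\partial Y\to\pi_1 Y$ gives $\pi_1(X-S)\cong G$. You instead start from $\#_l(S^1\times S^3)$ containing a standard unknotted sphere $S$ in a ball, and do surgery on circles in the complement representing $r_1,\ldots,r_m$ and $\mu w^{-1}$. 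Doubling a $2$-handlebody is surgery on the doubled $1$-handlebody along the attaching circles, so in fact the two constructions yield essentially the same ambient manifold; the difference is in how the sphere is produced (doubled cocore of the $\gamma$-handle versus a pre-existing unknotted sphere linked by the $c_0$ surgery circle) and in which van Kampen computation is run. Your version has the small advantage that you never need the surjectivity of $\pi_1\partial Y\to\pi_1 Y$: placing $S$ in a ball and tracking the meridian $\mu$ directly makes both $\pi_1(X-S)$ and $\pi_1(X)$ immediate quotients of free groups. The paper's version is slightly more compact to state. Both are correct, and both extend to higher genus by the same trick (you sketch this at the end; the paper instead obtains higher genus by adding handles to $S$).
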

Note that by adding on handles to $S$, we get surfaces of arbitrary genus.
\begin{proof}
Construct a handlebody $Y$ with $1$-handles and $2$-handles corresponding to the generators and relations of a presentation of $G$.  Represent $\gamma$ by an embedded circle in $\partial Y$, and let $Z$ be $Y$ together with a $2$-handle attached along $\gamma$ (with arbitrary framing).  Then $Z$ is simply-connected, and contains a properly embedded disc $\Delta$ (the cocore of this handle) such that $Z - \Delta$ deformation retracts onto $Y$.  In particular, $\pi_1(Z - \Delta) \cong G$.  Let $X$ be the double of $Z$, and take $S $ to be the double of the disk $\Delta$.  Note that $\pi_1\partial Y \to \pi_1 Y$ is surjective, which implies that $\pi_1(X - S) \cong G$.
\end{proof}

\section{Topological classification}\label{topclass}
The iterated twisted rim surgery construction of section~\ref{iterated} and the construction of symplectic surfaces in section~\ref{symplecticsec} (combined with $1$-twist rim surgery as in Proposition~\ref{1twistfundagp}) give large families of surface knots with the same knot group.  This section will treat the topological classification of these knots, with the smooth classification considered in the next section.

First we discuss the iterated twist rim surgery construction, starting with a surface $(X,\Sigma)$.   We make the same hypotheses as in Section~\ref{iterated} on the group $G$ of $\Sigma$ and the curve
$\alpha$ that determines the rim torus.  We perform a twisted rim surgery (with twisting $m$ such that $(m,d) = 1$) to obtain a new knot $(X,\Sigma_J(m))$, with the same group as $\Sigma$.   In the case that the knot groups were cyclic, we used in our earlier paper~\cite{kim-ruberman:surfaces} a computation in surgery theory to show that all such knots are pairwise homeomorphic.  This relied on the vanishing of the Wall groups $L_5^h(\Z[\Z_d])$ and $L_5^s(\Z[\Z_d])$, which does not hold for arbitrary fundamental groups.  So we return to the method of~\cite{kim:surfaces}, and work under the assumption that  $J$ is a ribbon knot. To compare the surfaces $\Sigma$ and $\Sigma_J(m)$
topologically in $X$, we will construct a relative $h$-cobordism
between their exteriors as in ~\cite{kim:surfaces}; a further condition on the Alexander polynomial of $J$ will ensure the vanishing of the torsion.  Note that for simplicity of notation, our computations in the first $3$ sections had to do with knot and surface \emph{complements}; for this section we will work with the \emph{exteriors} so we can use the relative s-cobordism theorem.  To this end, we will write $\nun{J}$ for an open tubular neighborhood of $J$.

Let us briefly review the construction from~\cite{kim:surfaces}, to which we refer for further details. If $J$ is a ribbon knot then there is a concordance $A$ in $S^3\times I$
between $J$ and an unknot $O$, such that the map $\pi_1(S^3 -J) \to \pi_1(S^3 \times I -A)$ is a surjection.   The twist map $\tau$ on $(S^3,J)$ extends to a self diffeomorphism with the same name on $(S^3\times I, A)$ as follows. On the collar of $\partial\nu(A)\cong
A\times{\partial{D^2}}\times{I}$,
$$
\tau(x\times{e^{i\theta}}\times{t}) =x\times{e^{i(\theta + 2\pi
t)}}\times{t} \quad \mbox{for} \quad
x\times{e^{i\theta}}\times{t}\in A\times{\partial{D^2}}\times{I}
$$
and otherwise, $\tau$ is the identity. Note that the restrictions
$\tau$ to $S^3\times \{0\}$ and $S^3\times \{1\}$ are the twist maps
$\tau_O$ and $\tau_J$ generated by $O$ and $J$.

Write $S^3=B^3_{+}\cup B^3_{-}$ and let $J=J_+\cup J_-$ where 
$J_+ = B^3_+ \cap J$ and $J_-$ is an unknotted arc in $B^3_-$. We
obtain a restricted concordance between the arcs $J_+$ and $O_+$ by
taking out $B^3_{-}\times I$ from $(S^3\times I, A)$ and then denote
the concordance by $A_{+}$ in $B^3_{+}\times I$. Using $\tau$
restricted to $(B^3_{+}\times I, A_{+})$, we obtain a new pair
$(X\times I, (\Sigma\times I)_A(m))$ by taking out the neighborhood
of the curve $\alpha\subset\Sigma$ in $X\times I$ and gluing back
$(B^3_{+}\times I, A_{+})$ along $\tau^m$. Explicitly, we write
$$(X\times I, (\Sigma\times I)_A(m))= X\times I-S^1\times
(B^3\times I, I\times I)\cup S^1\times_{\tau^m}(B^3\times I, {A_+}).$$

Note that in this construction, $X\times 1= (X,\Sigma_J(m))$ and
$X\times 0=(X,\Sigma)$. Consider the exterior $X\times
I-\nun{(\Sigma\times I)_A(m)}$, denoted by $W$, which provides a homology
cobordism between $X-\nun{\Sigma}$ and $X-\nun{\Sigma_J(m)}$ (see the proof of Proposition 4.3 in ~\cite{kim:surfaces}).   Like all of the cobordisms we will consider in this section, $W$ is a product along the boundary.  Let $M_0=X-\nun{\Sigma}$ and
$M_1=X-\nun{\Sigma_J(m)}$. From the decomposition of $(X\times I,
(\Sigma\times I)_A(m))$, we write $W$ as
\begin{equation}\label{cobordism}
W=(X-(S^1\times B^3)-\nun{\Sigma})\times I\cup S^1\times_{\tau^m}(B^3\times
I-\nun{A_+}).
\end{equation}

We assert that $W$ is an $h$-cobordism; the first step is to show that
$\pi_1(W)=G$. Then we show, for the universal covers
$\widetilde W$ and $\widetilde{M_1}$ of $W$ and $M_1$ respectively, that
$H_*(\widetilde W,\widetilde{M_1})$ is trivial. The Whitehead Theorem shows that the induced map by
inclusion, $i: M_1\to W$ is a homotopy equivalence.

\begin{lemma}\label{cobo:fundamentalgp}
$\pi_1(W)$ is isomorphic to G.
\end{lemma}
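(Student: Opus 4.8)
The plan is to follow, almost verbatim, the proof of Proposition~\ref{fundamentalgp}, replacing the knot exterior $\cp(J)$ by the exterior of the ribbon concordance $A_+$; the only genuinely new input is that the ribbon hypothesis supplies a surjection that lets us reduce the relevant computation to the one already carried out there. Since $W$ is a homology cobordism, $H_1(W)\cong H_1(M_0)=\Z/d$, so $W$ has a $d$-fold cyclic cover $\widetilde W$, and the decomposition~\eqref{cobordism} lifts to
\[
\widetilde W=\bigl((X-(S^1\times B^3))-\nun{\Sigma}\bigr)^d\times I\ \cup\ S^1\times_{\tilde\tau^m}\bigl(B^3\times I-\nun{A_+}\bigr)^d ,
\]
exactly as in~\eqref{branchcover1}. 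First I would record that the first piece has the same fundamental group as $(X-\Sigma)^d$, which I will call $H$; then I apply van Kampen's theorem to this decomposition to compute $\pi_1(\widetilde W)$, and finally reassemble the extension over $\Z/d$.

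For the van Kampen step, the overlap has $\pi_1\cong\Z^2$, generated by the class $[S^1]$ of the lifted curve $\tilde\alpha$ and the meridian $[\mu]$ of the lifted surface. The map $\varphi_1$ induced by inclusion into the first piece kills both generators: $[S^1]$ goes to the class of the pushoff of $\alpha$, which is null-homotopic by hypothesis, and $[\mu]$ goes to the meridian of the lifted surface, which is trivial in $H$ because $\mu_\Sigma$ has order $d$ in $G$ (this is just as in the proof of Lemma~\ref{branchcover}). Hence van Kampen gives
\[
\pi_1(\widetilde W)\cong H\ *\ \Bigl(\pi_1\bigl(S^1\times_{\tilde\tau^m}(B^3\times I-\nun{A_+})^d\bigr)\big/\langle\langle\mathrm{im}(\varphi_2)\rangle\rangle\Bigr),
\]
with $\varphi_2$ the other inclusion of the overlap, so it remains to show the second factor is trivial. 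Killing the image of the meridian fills the lifted concordance back in, and then killing $[S^1]$ yields $\langle\,\pi_1(S^1\times_{\tilde\tau^m}(B^3\times I,A_+)^d)\mid [S^1]=1\,\rangle$. Now the ribbon hypothesis enters: the surjection $\pi_1(S^3-J)\to\pi_1(S^3\times I-A)$ is an isomorphism on $H_1$ and is equivariant for the twist maps, so it lifts to a $\tilde\tau^m$-equivariant surjection after pulling back to the $d$-fold cyclic (branched) covers and gluing the unknotted ball pairs back in; consequently the group above is a quotient of $\langle\,\pi_1(S^1\times_{\tilde\tau^m}(S^3,J)^d)\mid [S^1]=1\,\rangle$, which is trivial by the Plotnick construction exactly as in Proposition~\ref{fundamentalgp} (choosing the gluing matrix with $d\gamma+m\beta=1$ realizes it as $\pi_1(S^4-A(J))$ modulo the meridian of $A(J)$). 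Thus $\pi_1(\widetilde W)\cong H$.

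Finally, $\widetilde W\to W$ is the cover corresponding to $\pi_1(W)\to H_1(W)=\Z/d$, so there is a short exact sequence $0\to H\to\pi_1(W)\to\Z/d\to 0$. The meridian $\mu_\Sigma$ maps to a generator of $\Z/d$, and $\mu_\Sigma^d=1$ already in $H=\pi_1((X-\Sigma)^d)$, so $\mu_\Sigma$ has order exactly $d$ in $\pi_1(W)$ and the sequence splits, the $\Z/d$ acting on $H$ by conjugation by $\mu_\Sigma$. This is precisely the extension data presenting $G=\pi_1(X-\Sigma)$ as $H\rtimes\Z/d$, so $\pi_1(W)\cong G$; keeping track of the identifications shows the isomorphism is induced by the inclusion $M_0=X-\nun{\Sigma}\hookrightarrow W$ (and similarly by $M_1\hookrightarrow W$), which is what is needed for the subsequent Whitehead-theorem argument. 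The one step that requires real care — and the main obstacle — is the vanishing of the second van Kampen factor: one must verify that the concordance version of the picture, with $\tilde\tau^m$ acting on the exterior of $A_+$ and its cyclic branched covers, still collapses via the ribbon surjection to Plotnick's knot-complement statement; everything else is a transcription of the arguments for Lemma~\ref{branchcover} and Proposition~\ref{fundamentalgp}.
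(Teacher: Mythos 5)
Your proof is correct and follows the paper's own strategy essentially verbatim: pass to the $d$-fold cover, apply van Kampen to the decomposition of $W^d$, use the ribbon hypothesis to deduce (via the surjection $\pi_1(S^3-J)\to\pi_1(S^3\times I - A)$ and compatibility with the twist maps and cyclic covers) that the "Plotnick piece" contributes trivially just as in Proposition~\ref{fundamentalgp}, and reassemble the extension over $\Z/d$. The only cosmetic difference is at the final step, where the paper records the isomorphism by chasing a commutative ladder of extensions built from the inclusion $M_1\hookrightarrow W$ (a five-lemma argument), whereas you identify both $\pi_1(W)$ and $G$ as the semi-direct product $H\rtimes\Z/d$ with the same conjugation action by the meridian — both are adequate and rest on the same observation that $\mu_\Sigma$ has order $d$.
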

\begin{proof}
We show first that the fundamental group of the $d$-fold cover of $W$ is
isomorphic to that of the $d$-fold cover of $M_1$ and then deduce that $\pi_1(W)\cong \pi_1(M_1)$.

Since the homology class of $\alpha$ is trivial in $W$ and $M_1$, we decompose the $d$-fold covers $W^d$ and $M_1^d$ from~\eqref{cobordism} as follows.
\begin{align}
\label{M:2covering}
M_1^d &=(X-S^1\times B^3-\nun{\Sigma})^d\cup
S^1\times_{\tilde{\tau}_J^m}(B^3-\nun{J_+})^d
\\
\label{W:2covering}
W^d &=(X-S^1\times B^3-\nun{\Sigma})^d\times I\cup
S^1\times_{\tilde{\tau}^m}(B^3\times I-\nun{A_+})^d
\end{align}

Applying the van Kampen theorem to these decompositions, we can compare the two diagrams:

%\begin{equation*}
%\xymatrix@C=-1pc{
%&\pi_1((X-S^1\times B^3-\Sigma)^d)Ê\ar[dr]^{\psi_1}&\\
%{\pi_1 (S^1\times(\partial{B^3}-\{\mbox{two points}\}))} Ê\ar[ur]^{\varphi_1} \ar[dr]^{\varphi_2}
%&&  \pi_1(M_1^d)\\
%& \pi_1(S^1\times_{\tilde\tau_J^m} \cp(J)^d)Ê\ar[ur]^{\psi_2}&
%}
%\end{equation*}
%and
%\begin{equation*}
%\xymatrix@C=-1pc{
%&\pi_1((X-S^1\times B^3-\Sigma)^d\times I)\ar[dr]^{\psi_1}&\\
%{\pi_1 (S^1\times(\partial{B^3}-\{\mbox{two points}\}\times I))} Ê\ar[ur]^{\varphi_1} \ar[dr]^{\varphi_2}
%&&  \pi_1(W^d)\\
%& \pi_1(S^1\times_{\tilde\tau_A^m} (B^3\times
%I-A_+)^d)Ê\ar[ur]^{\psi_2}&
%}
%\end{equation*}

\begin{equation*}
\xymatrix{
&\pi_1((X-S^1\times B^3-\nun{\Sigma})^d) \ar@{-}@<-6ex>[dd]^{i_2}Ê\ar[dr]^{\psi_1}&\\
{\pi_1 (S^1\times(\partial{B^3}-\{\mbox{two points}\}))} \ar[ddd]^{i_1}Ê\ar[ur]^{\varphi_1} \ar[dr]^{\varphi_2}
&&  \pi_1(M_1^d)\ar[ddd]^{i_4}\\
& \pi_1(S^1\times_{\tilde\tau_J^m} \cp(J)^d)\ar@<-6ex>[d]Ê\ar@<-2.0ex>[ddd]^>>>>>>>>{i_3}\ar[ur]^{\psi_2}&
\\
&\pi_1((X-S^1\times B^3-\nun{\Sigma})^d\times I)\ar[dr]^{\psi_1'}&\\
{\pi_1 (S^1\times(\partial{B^3}-\{\mbox{two points}\}\times I))} Ê\ar[ur]^{\varphi_1'} \ar[dr]^{\varphi_2'}
&&  \pi_1(W^d)\\
& \pi_1(S^1\times_{\tilde\tau_A^m} (B^3\times
I-\nun{A_+})^d)Ê\ar[ur]^{\psi_2'}&
}
\end{equation*}
Here, $i_1$, $i_2$, $i_3$, and $i_4$ are the maps induced by inclusions.
Obviously, $i_{1}$ and $i_{2}$ are isomorphisms. Moreover, $i_3$ is
onto since $J$ is a ribbon knot and so the induced map $i_4$ is
onto.  Recall from Proposition~\ref{fundamentalgp} that $\pi_1(M_1^d)$
is isomorphic to $ \pi_1 ((X-S^1\times B^3-\nun{\Sigma})^d).$  Our
claim is that $\pi_1(W^d)$ is isomorphic to $\pi_1 ((X-S^1\times
B^3-\nun{\Sigma})^d\times I)$ and so it is isomorphic to
$\pi_1(M_1^d)$. This will then imply that $\pi_1(W)$ is isomorphic
to $\pi_1(M_1)$.

If we consider the argument of Proposition~\ref{fundamentalgp},
$\varphi_1$ is the zero map and the quotient
$\pi_1(S^1\times_{\tilde\tau_J^m} \cp(J)^d)//im(\varphi_2)$ is
trivial. In the diagram of $\pi_1(W^d)$, similarly $\varphi'_1$ is
the zero map and the quotient $\pi_1(S^1\times_{\tilde\tau_J^m}
\cp(J)^d)//im(\varphi_2')$ is also trivial by the commutativity and
surjectivity of $i_3$. This shows that $\pi_1(W^d)$ is isomorphic to
$\pi_1 ((X-S^1\times B^3-\nun{\Sigma})^d\times I)$.

Thus, we have the following commutative diagram:
$$
\begin{array}{ccccccccc}
0 \! & \longrightarrow & \! \pi_1(M_1^d) \! & \longrightarrow & \!
\pi_1(M_1) \! & \longrightarrow & \! H_1(M_1) \! & \longrightarrow
& \! 0\vspace{5 pt} \\
  & & \cong\!\big\downarrow & & \!\big\downarrow & &
\cong\!\big\downarrow &
&  \\
0 \! & \longrightarrow & \! \pi_1(W^d) \! & \longrightarrow & \!
\pi_1(W) \! & \longrightarrow & \! H_1(W) \! & \longrightarrow
& \! 0\\
\end{array}
$$

Note that $ \pi_1(M_1)$ maps to $\pi_1(W)$ surjectively and some simple diagram-chasing shows that it is an isomorphism.
\end{proof}

\begin{proposition}\label{prop:cobo}
If $J$ is a ribbon knot and the homology of $d$-fold cover
$(S^3-J)^d$, $H_1((S^3-J)^d)\cong\Z$ with $(m,d)=1$ then there
exists an $h$-cobordism $W$ between $M_0=X-\nun{\Sigma}$ and
$M_1=X-\nun{\Sigma_J(m)}$ rel $\partial$.
\end{proposition}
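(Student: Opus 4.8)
The plan is to verify directly that the homology cobordism $W$ of~\eqref{cobordism} is in fact an $h$-cobordism, which by the Whitehead Theorem reduces to showing that the inclusion $i\colon M_1 \to W$ is a $\pi_1$-isomorphism and a homology isomorphism with appropriate local coefficients. The first half is already done: Lemma~\ref{cobo:fundamentalgp} gives $\pi_1(W) \cong G \cong \pi_1(M_1)$, and one checks that the inclusion actually induces this isomorphism (this was noted in the last line of that lemma's proof). Since $W$ is a product along $\partial M_1$ by construction, it then suffices to prove that $H_*(\widetilde{W}, \widetilde{M_1}) = 0$, where tildes denote universal covers; equivalently, $H_*(W, M_1; \Z[G]) = 0$.

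First I would set up the long exact sequence of the pair $(W, M_1)$ with $\Z[G]$ coefficients and reduce to showing $H_*(W,M_1;\Z[G])=0$. Because the relevant part of the construction takes place away from $\pi_1$—recall from~\eqref{cobordism} that $W$ is obtained from $(X - S^1\times B^3 - \nun{\Sigma})\times I$ by gluing in $S^1\times_{\tau^m}(B^3\times I - \nun{A_+})$, while $M_1$ is the same with $B^3 - \nun{J_+}$ in place of $B^3\times I - \nun{A_+}$—the pair $(W, M_1)$ excises to the pair $\bigl(S^1\times_{\tau^m}(B^3\times I - \nun{A_+}),\ S^1\times_{\tau^m}(B^3 - \nun{J_+})\bigr)$. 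So the computation localizes to the ribbon-concordance exterior. Passing to the $d$-fold cover as in~\eqref{W:2covering}, \eqref{M:2covering}, the relevant pair becomes $\bigl((B^3\times I - \nun{A_+})^d,\ (B^3 - \nun{J_+})^d\bigr)$, i.e.\ (up to the standard identification of $\cp(J)$ with $E(J)$) the pair given by the $d$-fold cyclic cover of the concordance exterior $S^3\times I - \nun{A}$ relative to the $d$-fold cover of $S^3 - \nun{J}$.

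The crux is therefore a statement about ribbon concordances: if $A$ is a ribbon concordance from $J$ to the unknot $O$, then the inclusion of the cyclic branched (or unbranched) cover of $(S^3, J)$ into that of $(S^3\times I, A)$ is a homology equivalence with $\Z[H]$-coefficients, where $H = \pi_1((S^3,J)^d)$. Here the hypothesis $H_1((S^3 - J)^d) \cong \Z$ is exactly what makes the Alexander-module bookkeeping work: the relevant homology is governed by a presentation matrix whose determinant is (a power of) the Alexander polynomial evaluated appropriately, and the ribbon hypothesis together with $H_1((S^3-J)^d)\cong\Z$ forces this to be a unit in $\Z[\Z] = \Z[t^{\pm 1}]$, so the homology vanishes. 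This is precisely the mechanism used in the proof of Proposition~4.3 of~\cite{kim:surfaces}, which I would invoke; the $(m,d)=1$ hypothesis enters, via Proposition~\ref{fundamentalgp} and Lemma~\ref{cobo:fundamentalgp}, to guarantee that the splitting of the relevant group extensions is compatible so that the $\Z[G]$-homology of $(W,M_1)$ is detected on the $d$-fold cover. Finally, feeding $H_*(\widetilde W,\widetilde{M_1})=0$ and $\pi_1 W\cong\pi_1 M_1$ into the Whitehead Theorem shows $M_1\hookrightarrow W$ is a homotopy equivalence, and by symmetry (or Poincaré–Lefschetz duality, since $W$ is a compact cobordism that is a product on the boundary) $M_0 \hookrightarrow W$ is one as well, so $W$ is the desired $h$-cobordism rel $\partial$.

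The main obstacle I anticipate is the homology computation with the noncommutative coefficient ring $\Z[G]$: one must be careful that the excision/cover argument genuinely reduces the $\Z[G]$-homology of $(W,M_1)$ to a $\Z[H]$- (ultimately $\Z[t^{\pm1}]$-) computation, rather than losing information in the passage between covers. This is where the precise form of the group extensions from Lemma~\ref{cobo:fundamentalgp}, the surjectivity of $i_3$ coming from the ribbon hypothesis, and the condition $H_1((S^3-J)^d)\cong\Z$ all have to be combined; everything else is formal manipulation of van Kampen diagrams and Mayer–Vietoris sequences.
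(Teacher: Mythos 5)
Your overall strategy matches the paper's proof: show $H_*(\widetilde W, \widetilde M_1) = 0$, excise to the mapping-torus pieces glued in over $S^1$, pass to covers, and apply the ribbon and $H_1((S^3-J)^d)\cong\Z$ hypotheses. However, you correctly identify as the ``main obstacle'' the reduction from $\Z[G]$-homology to a $\Z$-homology computation on the $d$-fold cover of the concordance exterior, and you leave precisely that step unresolved; the paper closes this gap with a specific covering-space observation that your outline does not supply. With $H=\pi_1(W^d)\cong\pi_1(M_1^d)$, the universal covers of $W$ and $M_1$ are the $H$-covers of their $d$-fold covers. The crucial input, carried over from the van Kampen diagram in Proposition~\ref{fundamentalgp}, is that the inclusion-induced map $\psi_2:\pi_1(S^1\times_{\tilde\tau_J^m}\cp(J)^d)\to H$ is \emph{trivial}; consequently the $H$-cover of $S^1\times_{\tilde\tau_J^m}(B^3-\nun{J_+})^d$ is simply a disjoint union of copies of that space indexed by the elements of $H$, and (using the surjectivity of $i_3$ forced by the ribbon hypothesis) the same holds for the concordance piece in $\widetilde W$. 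Excision then yields $H_*(\widetilde W,\widetilde M_1)\cong\overset{H}{\oplus} H_*\bigl((B^3\times I-\nun{A_+})^d,\,(B^3-\nun{J_+})^d\bigr)$, and the hypothesis $H_1((S^3-J)^d)\cong\Z$ makes each summand vanish. Without the triviality of $\psi_2$, the preimage in $\widetilde M_1$ of the mapping-torus piece would be some nontrivial connected cover and the $\Z[G]$-homology would not split so cleanly, so this is not merely a bookkeeping point. Also, your ``presentation matrix whose determinant is a unit'' heuristic is not the mechanism used here: the hypothesis enters directly as homological triviality of the covered ribbon-concordance pair, not through an Alexander-polynomial unit computation. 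Once you supply the $\psi_2$-triviality argument and replace the Alexander-polynomial heuristic by the direct homology statement, your proposal coincides with the paper's proof.
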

\begin{proof}
We will show that for the universal coverings of $W$ and
$M_1$ denoted by $\widetilde W$ and $\widetilde{M_1}$ respectively,
 $H_*(\widetilde W,\widetilde{M_1})$ is trivial. If follows by the Whitehead theorem that the inclusion $i :M_1\to W$ is a homotopy
equivalence.

We first consider the $d$-fold covers of $W$ and $M_1$ associated to
$\pi_1(W)\to H_1(W)=\Z/d$ and $\pi_1(M_1)\to H_1(M_1)=\Z/d$.   As before, denote by $H$ the groups $\pi_1(W^d)\cong\pi_1(M_1^d)$. Note that the universal covers of $W^d$ and $M_1^d$ are the universal
covers of $W$ and $M_1$.  We will denote the preimage, under the
universal cover, of a subset $S$ of $W^d$ or $M_1^d$ by $S^H$, and
refer to this as the $H$-cover of $S$.   Then the universal covers of $W$ and $M$ decompose into the preimages of the pieces in the 
decompositions ~\eqref{W:2covering} and ~\eqref{M:2covering} of their $d$-fold covers $W^d$ and $M^d$:
\begin{equation}\label{universalcover:W}
\widetilde W=((X-S^1\times B^3-\nun{\Sigma})^{d})^H\times I\cup
(S^1\times_{\tilde{\tau}^m}(B^3\times I-\nun{A_+})^d)^H
\end{equation}
and
\begin{equation}\label{universalcover:M_1}
\widetilde M_1=((X-S^1\times B^3-\nun{\Sigma})^{d})^H\cup
(S^1\times_{\tilde{\tau}_J^m}(B^3-\nun{J_+})^d)^H.
\end{equation}

In order to describe the $H$-cover of
$S^1\times_{\tilde{\tau}_J^m}(B^3-\nun{J_+})^d$, we consider the
inclusion-induced map
$\pi_1(S^1\times_{\tilde{\tau}_J^m}(B^3-\nun{J_+})^d)\to
\pi_1(M_1^d)$.
In the diagram ~\eqref{d-diagram} induced by the van Kampen theorem to the decomposition of $M_1^d$ in ~\eqref{branchcover2}, the argument of Proposition~\ref{fundamentalgp} shows that $\psi_2:\pi_1(S^1\times_{\tilde{\tau}_J^m}(B^3-\nun{J_+})^d)\to
\pi_1(M_1^d)$ is trivial.

%induces the following diagram
%associated to the decomposition of $M_1^d$ in ~\eqref{branchcover2}:
%\begin{equation*}
%\xymatrix@C=-2pc{
%&\pi_1((X-S^1\times B^3-\nun{\Sigma})^d)Ê\ar[dr]^{\psi_1}&\\
%{\pi_1 (S^1\times(\partial{B^3}-\{\mbox{two points}\}))} Ê\ar[ur]^{\varphi_1} \ar[dr]^{\varphi_2}
%&& \pi_1((X - \nun{\Sigma_{J}(m)})^d)\\
%& \pi_1(S^1\times_{\tilde\tau^m} (B^3 -\nun{J_+})^d)Ê\ar[ur]^{\psi_2}&
%}
%\end{equation*}

%$$
%\begin{array}{ccc}
%ÊÊ& \makebox[0 pc]{$\pi_1(X-S^1\times B^3-\nun{\Sigma})^d$} & \\
%\\
%\varphi_1\nearrow & & \searrow\cong \\
%\\
%\makebox[0 pc]{$\pi_1 (S^1\times(\partial{B^3}-\{\mbox{two points} Ê
%\}))$} & & \makebox[0 pc]{$\quad \pi_1(X - \Sigma_{J}(m))^d$}\\
%\\
%\varphi_2\searrow & & \nearrow\psi_2\\
%\\
%& \pi_1(S^1\times_{\tilde\tau^m} (B^3-\nun{J_+})^d) &\\
%\end{array}
%$$
This means that the $H$-cover of
$S^1\times_{\tilde{\tau}_J^m}(B^3-\nun{J_+})^d$ is the disjoint
union of
 copies of $S^1\times_{\tilde{\tau}_J^m}(B^3-\nun{J_+})^d$, indexed by elements of $H$.
A similar argument shows that the $H$-cover of
$S^1\times_{\tilde{\tau}^m}(B^3\times I-\nun{A_+})^d$ is the
disjoint union of copies of $S^1\times_{\tilde{\tau}^m}(B^3\times
I-\nun{A_+})^d$ indexed by elements of $H$ as well. So, by excision,
the relative homology for the pair $(\widetilde W,\widetilde{M_1})$
takes the following simple form:
$$
H_*(\widetilde W,\widetilde{M_1})\cong
\overset{H}{\oplus} H_*((B^3\times I-\nun{A_+})^d, (B^3-\nun{J_+})^d))
$$
where $\overset{H}{\oplus}$ means a direct sum indexed by the elements of $H$.

Our assumption $H_1((S^3-J)^d)\cong\Z$ implies that $H_*((B^3\times
I-\nun{A_+})^d, (B^3-\nun{J_+})^d)$ is trivial and so the result
follows.
\end{proof}

Now we shall compute the Whitehead torsion of the pair $(W,M_1)$. If
this is zero then we would obtain an $s$-cobordism $W$ between $M_0=X-\nun{\Sigma}$ and
$M_1=X-\nun{\Sigma_J(m)}$ rel $\partial$ that is topologically trivial if $G$ is a good group.

The homotopy equivalence $i: M_1 \to W$ induces a well-defined
Whitehead torsion $\tau(W,M_1)$; for the pair of universal covers
$(\widetilde W,\widetilde{M_1})$, the chain complex $C(\widetilde
W,\widetilde{M_1})$ over $\Z[\pi]$ is acyclic where $\pi=\pi_1(W)=G$
and so we have the torsion $\tau(W,M_1)\in Wh(\pi)$. Computing the
Whitehead torsion of each component of the decomposition for
$(W,M_1)$ according to the sum theorem, we may obtain $\tau(W,M_1)$;
recall that the decomposition of the pair $(W,M_1)=(X\times
I-\nun{(\Sigma\times I )_{A}(m)}, X-\nun{\Sigma_J(m)})$  is
\begin{equation*}
\begin{split}
((X-S^1\times B^3-\nun{\Sigma})
&\times I\cup S^1\times_{\tau^m}(B^3\times I- \nun{A_{+}}),\\
& X-S^1\times B^3-\nun{\Sigma}\cup
S^1\times_{{\tau}_J^m}(B^3-\nun{J_+})).
\end{split}
\end{equation*}
If we rewrite this as
\begin{equation}\label{decomp:cobo}
\begin{split}
((X-S^1\times B^3-\nun{\Sigma})\times I, & X-S^1\times B^3-\nun{\Sigma})\ \cup \\
& (S^1\times_{\tau^m}(B^3\times I-\nun{ A_{+}}),
S^1\times_{{\tau}_J^m}(B^3-\nun{J_+})),
\end{split}
\end{equation}
then we can observe that the Whitehead torsion of the first
component pair $((X-S^1\times B^3-\nun{\Sigma})\times I, X-S^1\times
B^3-\nun{\Sigma})$ is zero. However, the Whitehead torsion
$\tau((S^1\times_{\tau^m}(B^3\times I-\nun{ A_{+}}),
S^1\times_{{\tau}_J^m}(B^3-\nun{J_+}))$ is not defined since
$S^1\times_{{\tau}_J^m}(B^3-\nun{J_+})$ may not be a deformation
retract of $S^1\times_{\tau^m}(B^3\times I- \nun{A_{+}})$.

Thus, we now consider the Reidemeister torsion of $(W,M_1)$
according to the identity homomorphism
$id:\mathbf{Z}[\pi]\longrightarrow \mathbf{Z}[\pi]$. Note that the
relation
$$\tau^{id}(W,M_1)=id_*\tau(W,M_1)$$
shows that if the Reidemeister torsion associated to the identity is
trivial then the Whitehead torsion is zero. So, our claim is that
the Reidemeister torsion $\tau^{id}(W,M_1)$, denoted simply by
$\tau(W,M_1)$, according to the coefficient $\mathbf{Z}[\pi]$ is
trivial.

By the gluing theorem, we compute the Reidemeister torsion of each
component in the decomposition of $(W,M_1)$ to get $\tau(W,M_1)$.
Indeed, we need to check if the Reidemeister torsion of each
component is well defined over $\mathbf{Z}[\pi]$.

\begin{theorem}\label{homeo}
If $J$ is a ribbon knot and the homology of the $d$-fold cover of
$S^3-J$, $H_1((S^3-J)^d)\cong\Z$ with $(m,d)=1$, and $G$ is a good
group, then $(X,\Sigma)$ is pairwise homeomorphic to
$(X,\Sigma_J(m))$.
\end{theorem}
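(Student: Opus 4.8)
The plan is to combine Proposition~\ref{prop:cobo} with the topological $s$-cobordism theorem, which applies since $G$ is a good group. Proposition~\ref{prop:cobo} already supplies an $h$-cobordism $W$, rel $\partial$, between the exteriors $M_0=X-\nun{\Sigma}$ and $M_1=X-\nun{\Sigma_J(m)}$, so the only remaining task is to prove that its Whitehead torsion $\tau(W,M_1)\in Wh(G)$ vanishes. Granting this, $W$ is an $s$-cobordism rel $\partial$, hence homeomorphic to $M_1\times I$ rel $\partial$. Gluing back the tubular neighborhood of $(\Sigma\times I)_A(m)$ — which is a concordance from $\Sigma$ to $\Sigma_J(m)$ and so is a product along its side boundary, with normal Euler number $[\Sigma]^2$ — promotes this product structure to a homeomorphism of pairs $(X\times I,(\Sigma\times I)_A(m))\cong(X\times I,\Sigma_J(m)\times I)$ rel $\partial$, whose restriction to the two ends is the asserted pairwise homeomorphism $(X,\Sigma)\cong(X,\Sigma_J(m))$.

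So everything comes down to showing $\tau(W,M_1)=0$. As in the discussion preceding the statement, I would instead compute the Reidemeister torsion $\tau^{id}(W,M_1)$ with respect to the identity homomorphism $\Z[\pi]\to\Z[\pi]$, $\pi=G$; by the relation $\tau^{id}(W,M_1)=id_*\tau(W,M_1)$, triviality of $\tau^{id}$ forces $\tau(W,M_1)=0$. The gain is that $\tau^{id}$ is multiplicative under gluing and is defined for the individual pieces of the decomposition~\eqref{decomp:cobo}, whose Whitehead torsions need not even be defined. In that decomposition the cylinder pair $\bigl((X-S^1\times B^3-\nun{\Sigma})\times I,\ X-S^1\times B^3-\nun{\Sigma}\bigr)$ contributes trivial torsion, so by the gluing theorem the whole computation reduces to showing that the Reidemeister torsion of
$$
\bigl(S^1\times_{\tau^m}(B^3\times I-\nun{A_+}),\ S^1\times_{\tau_J^m}(B^3-\nun{J_+})\bigr)
$$
over $\Z[G]$ is trivial.

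To evaluate that last torsion I would pass to covers, reusing the analysis from Proposition~\ref{prop:cobo}. The image in $G$ of $\pi_1$ of each of these two pieces is the cyclic subgroup generated by the meridian, so their preimages in the universal cover are disjoint unions, indexed by $H=\pi_1(W^d)\cong\pi_1(M_1^d)$, of copies of the $d$-fold cyclic covers $S^1\times_{\tilde\tau^m}(B^3\times I-\nun{A_+})^d$ and $S^1\times_{\tilde\tau_J^m}(B^3-\nun{J_+})^d$. Hence the relative $\Z[G]$-chain complex of this piece is induced up from the relative $\Z[\Z/d]$-chain complex of the pair $\bigl((B^3\times I-\nun{A_+})^d,\ (B^3-\nun{J_+})^d\bigr)$, and its $\Z[G]$-torsion is the image of that complex's $\Z[\Z/d]$-torsion under the inclusion $\Z[\Z/d]\hookrightarrow\Z[G]$. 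Since $A$ is a ribbon concordance from $J$ to the unknot with $\pi_1(S^3-J)\twoheadrightarrow\pi_1(S^3\times I-A)$, the inclusion $(B^3-\nun{J_+})^d\hookrightarrow(B^3\times I-\nun{A_+})^d$ is $\pi_1$-surjective, and the hypothesis $H_1((S^3-J)^d)\cong\Z$ is exactly what made the relative complex acyclic in Proposition~\ref{prop:cobo}. Feeding the same input into the torsion — a ribbon concordance has an exterior with a handle decomposition free of $1$- and $3$-handles, so the torsion is read off from a square presentation matrix whose determinant is, up to units and powers of $t$, the Alexander polynomial $\Delta_J(t)$; the torsion of $H_1((S^3-J)^d)$ is the obstruction to $\Delta_J(t)$ being invertible over $\Z[\Z/d]$, and it vanishes by hypothesis — shows this torsion is trivial in $Wh(\Z/d)$, hence so is the $\Z[G]$-torsion of the piece.

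The step I expect to be the main obstacle is precisely this upgrade from the homological vanishing of Proposition~\ref{prop:cobo} to the vanishing of Reidemeister torsion. One cannot merely cite a simple-homotopy equivalence, since $S^1\times_{\tau_J^m}(B^3-\nun{J_+})$ need not deformation retract onto a subspace of $S^1\times_{\tau^m}(B^3\times I-\nun{A_+})$; the torsion of the concordance exterior has to be pinned down directly, via the handle structure of the ribbon concordance and the arithmetic condition $H_1((S^3-J)^d)\cong\Z$ on the Alexander polynomial of $J$. Once this is established, the gluing theorem assembles the pieces to give $\tau^{id}(W,M_1)=0$, so $\tau(W,M_1)=0$ in $Wh(G)$, and the topological $s$-cobordism theorem completes the proof.
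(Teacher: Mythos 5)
Your overall scaffolding agrees with the paper: Proposition~\ref{prop:cobo} supplies the $h$-cobordism $W$ rel $\partial$, you pass to Reidemeister torsion over $\Z[G]$ to control the Whitehead torsion, you use the decomposition~\eqref{decomp:cobo} together with multiplicativity and triviality of the cylinder piece, and this reduces the problem to evaluating the torsion of
$(K_2,L_2)=\bigl(S^1\times_{\tau^m}(B^3\times I-\nun{A_+}),\ S^1\times_{\tau_J^m}(B^3-\nun{J_+})\bigr)$.
Up to this point you are following the paper's argument.

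The divergence occurs exactly at the step you yourself flag as the likely obstacle, and it is a genuine gap. You try to evaluate the torsion of the cover pair directly from a handle description of the ribbon concordance and the arithmetic of $\Delta_J(t)$, and claim it vanishes in $Wh(\Z/d)$. There are two problems. First, you drop the $S^1$: in the universal cover of $W$ the $[S^1]$-direction does not unwind (it already maps to $1\in G$), so the chain complex of $(\widetilde K_2,\widetilde L_2)$ is induced from the \emph{mapping torus} pair $\bigl(S^1\times_{\tilde\tau^m}\widetilde F,\ S^1\times_{\tilde\tau_J^m}\widetilde F_0\bigr)$, not from the fiber pair $(\widetilde F,\widetilde F_0)$ alone. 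Second, and more seriously, the hypothesis $H_1((S^3-J)^d)\cong\Z$ gives only acyclicity of the cover pair, i.e.\ that $\tau^j(F,F_0)$ is \emph{defined}; it does not make that element of $Wh(\Z/d)$ trivial, and there is no reason to expect it to be. "The obstruction to invertibility vanishes" is a statement about $H_1$, not about the class of $\Delta_J(t)\bmod(t^d-1)$ in the Whitehead group, and $Wh(\Z/d)$ is nontrivial (indeed infinite) for most $d$.

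The paper avoids both issues by exploiting the fiber-bundle structure of $(K_2,L_2)$ over $S^1$. Cutting $S^1$ into two arcs yields a short exact Mayer--Vietoris sequence of relative chain complexes, whose two "endpoint'' terms are copies of $(\widetilde F,\widetilde F_0)$ and whose two "arc'' terms are product pairs $\bigl(I\times\widetilde F, I\times\widetilde F_0\bigr)$, simple homotopy equivalent to $(\widetilde F,\widetilde F_0)$. Multiplicativity of torsion for this sequence forces $\tau^{i_2}(K_2,L_2)=1$ \emph{regardless} of the (possibly nontrivial) value of $\tau^j(F,F_0)$; the acyclicity furnished by $H_1((S^3-J)^d)\cong\Z$ is used only to make the various torsions defined. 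It is this cancellation, not a direct calculation of the fiber torsion, that you need. As written, your proof would require showing that the fiber-pair torsion is itself trivial in $Wh(\Z/d)$, which you have not done and which is not what the hypotheses give.
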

\begin{proof}
According to Proposition ~\ref{prop:cobo} and the above argument, it
is sufficient to show that the Reidemeister torsion $\tau(W,M_1)$ is
trivial. We denote by
$K_1\cup K_2$ the union of each component of the decomposition ~\eqref{cobordism} of $W$, and likewise $L_1\cup L_2$ for $M_1$. Note that from ~\eqref{universalcover:W}, ~\eqref{universalcover:M_1} in Proposition ~\ref{prop:cobo},
the universal cover of $W$ is the union of $H$-covers of the $d$-fold covers of each component and the same form works for $M_1$. Extending the notation, we write $\widetilde W=\widetilde
K_1\cup \widetilde K_2$ and
$\widetilde M_1=\widetilde L_1\cup
\widetilde L_2$,
where for each $\alpha=1,2$, $\widetilde K_{\alpha}$ is the preimage of $K_{\alpha}$ under the universal cover of $W$
associated to the inclusion induced map $i_{\alpha *}: \pi_1
K_{\alpha}\to \pi_1 W=G$ and likewise, $\widetilde L_{\alpha}$ for  $L_{\alpha}$.

Then we have the Mayer-Vietoris sequence for the pair $(\widetilde W,
\widetilde M_1)=(\widetilde K_1, \widetilde L_1)\cup (\widetilde
K_2, \widetilde L_2)$ and $(\widetilde K_0, \widetilde
L_0)=(\widetilde K_1, \widetilde L_1)\cap (\widetilde K_2,
\widetilde L_2)$.
If the Reidemeister torsion of each component in the decomposition
of $(W,M_1)$ associated to the inclusion induced morphism
$i_{\alpha*}:\Z[\pi_1(K_{\alpha})]\to \Z[\pi_1(W)]$ is well defined
then the Mayer-Vietoris sequence and the multiplicativity of torsion shows
\begin{equation*}
\begin{split}
\tau(W,M_1) \cdot\tau^{i_0}(K_0,L_0) = \tau^{i_1}(K_1,L_1) \cdot
\tau^{i_2}(K_2,L_2).
\end{split}
\end{equation*}
Since the chain complexes $C_*(\widetilde K_0, \widetilde L_0)$ and
$C_*(\widetilde K_1, \widetilde L_1)$ are obviously acyclic, their
torsion $\tau^{i_0}(K_0,L_0)$ and $ \tau^{i_1}(K_1,L_1)$ are well
defined and moreover they are trivial. So, we only need to compute 
$\tau^{i_2}(K_2,L_2)=\tau^{i_2}(S^1\times_{\tau^m}(B^3\times
I-\nun{ A_{+}}), S^1\times_{{\tau}_J^m}(B^3-\nun{J_+}))$ to get $\tau(W,M_1)$. In order to define the
torsion $\tau^{i_2}(K_2,L_2)$, we need to check if the chain
complex $C_*(\widetilde K_2, \widetilde L_2)$ is acyclic. In Proposition~\ref{prop:cobo}, 
$\widetilde K_2$ is the disjoint union of $H$-copies of
$S^1\times_{\tilde{\tau}^m}(B^3\times I-\nun{A_+})^{d}$. Similarly, $\widetilde L_2$ is the disjoint union of $H$- copies of $S^1\times_{{\tau}_J^m}(B^3-\nun{J_+})^d$ and so we write 

$$(\widetilde K_2, \widetilde L_2)=(\overset{H}{\coprod} S^1\times_{\tilde{\tau}^m}(B^3\times I-\nun{A_+})^{d},
\overset{H}{\coprod}S^1\times_{\tilde{\tau}_J^m}(B^3-\nun{J_+})^d).$$

So, the chain complex is the form of 
$$C_*(\widetilde K_2, \widetilde L_2)\cong
\overset{H}{\oplus} C_*(S^1\times_{\tilde{\tau}^m}(B^3\times
I-\nun{A_+})^{d}, S^1\times_{\tilde{\tau}_J^m}(B^3-\nun{J_+})^d).$$ Since
$(S^3-J)^d$ is a homology circle, the relative homology $H_*((B^3\times I-\nun{A_+})^{d},
(B^3-\nun{J_+})^d)$ is trivial and so $C_*(\widetilde K_2,\widetilde L_2)$
is acyclic. Now, in the pair $(K_2, L_2)=(S^1\times_{{\tau}^m}(B^3\times I-\nun{A_+}),
S^1\times_{{\tau}_J^m}(B^3-\nun{J_+}))$, we consider it as a relative smooth fiber bundle over $S^1$ with the fiber $(B^3\times I-
\nun{A_{+}}, B^3-\nun{J_+})$:
$$
(B^3\times I- A_{+}, B^3-\nun{J_+})\hookrightarrow
(S^1\times_{\tau^m}(B^3\times I- A_{+}),
S^1\times_{{\tau}_J^m}(B^3-\nun{J_+}))\longrightarrow S^1
$$

For simplicity, we denote its relative fiber by $(F,F_0)$ and so we write the
pair of covers $(\widetilde K_2,\widetilde L_2)$  as
$(\overset{H}{\coprod} S^1\times_{\tilde{\tau}^m}\widetilde F,
\overset{H}{\coprod} S^1\times_{{\tilde\tau}^m}\widetilde F_{0})$, where $(\widetilde F,\widetilde F_0)$ is the pair of the $d$-fold covers associated to the inclusions. Using the same techniques as in Proposition 4.4 of  ~\cite{kim:surfaces}, we have the following exact sequence for
$(\overset{H}{\coprod} S^1\times_{\tilde{\tau}^m}\widetilde F,
\overset{H}{\coprod} S^1\times_{{\tilde\tau}^m}\widetilde F_{0})$;

%Let us consider closed manifold pairs $(Y_1,
%Z_1)=([0,\frac{1}{2}]\times \widetilde F, [0,\frac{1}{2}]\times
%\widetilde F_{0})$ and $(Y_2, Z_2)=([\frac{1}{2}, 1]\times
%\widetilde F, [\frac{1}{2}, 1]\times \widetilde F_{0})$. Define a
%map $f$ of a subspace $A:=\{0\}\times\widetilde
%F\cup\{\frac{1}{2}\}\times\widetilde F$ of $Y_1$ into $Y_2$ by
%$$f|_{\{0\}\times\widetilde F}=\{1\}\times\tilde\tau^m,
%f|_{\{1/2\}\times\widetilde F}=1_{\{1/2\}\times\widetilde F}.$$

%Then letting  $B:=\{0\}\times\tilde
%F_{0}\cup\{\frac{1}{2}\}\times\tilde F_{0}\subset A$, we can
%consider $(S^1\times_{\tilde\tau^m}\widetilde F, S^1\times_{\tilde
%\tau^m}\widetilde F_{0})$ as the adjunction space $(Y_1\cup_{f}Y_2,
%Z_1\cup_{f}Z_2)$ of the system $(Y_1, Z_1)\supset
%(A,B)\stackrel{f}\longrightarrow (Y_2, Z_2)$. There is a short exact
%sequence
%\begin{equation*}
%\begin{split}
%0 & \longrightarrow C_*(Y_1\cap Y_2,Z_1\cap Z_2)\longrightarrow
%C_*(Y_1, Z_1)\oplus C_*(Y_2, Z_2) \\
%& \longrightarrow C_*(Y_1\cup_{f}Y_2, Z_1\cup_{f}Z_2)\longrightarrow
%0.
%\end{split}
%\end{equation*}

\begin{equation*}
\begin{split}
0&\longrightarrow \overset{H}{\oplus} (C_*(\widetilde F, \widetilde
F_{0})\oplus C_*(\widetilde
F, \widetilde F_{0})) \\
& \longrightarrow \overset{H}{\oplus} (C_*([0,1/2]\times \widetilde
F, [0,1/2]\times \widetilde F_0)\oplus C_*([1/2,1]\times \widetilde
F, [1/2,1]\times
\widetilde F_0))\\
&\longrightarrow \overset{H}{\oplus} (C_*(S^1\times_{\tilde
\tau^m}\widetilde F, S^1\times_{\tilde \tau^m}\widetilde
F_0))\longrightarrow 0.
\end{split}
\end{equation*}

By the assumption that $(\widetilde F,\widetilde F_0)$ is
homologically trivial, it follows that if
$j:\Z[\pi_1(F)]\longrightarrow \Z[\pi_1(W)]$ denotes the morphism
induced by inclusion then the torsion $\tau^{j}(F, F_0)$ is defined.
From the above short exact sequence and the multiplicativity of the
torsion we deduce that $\tau^{i_2}
(S^1\times_{\tau^m}F,S^1\times_{\tau^m}F_0)=0$ implies that $\tau^{i_2}(K_2,L_2) $ is also trivial and thus
 the Reidemeister torsion $\tau(W,M_1)\in K_1(\Z[G])/\pm G$ is
trivial.
\end{proof}

\subsection{Topological triviality for $1$-twist rim surgery}\label{sec:1twistscob}
In section~\ref{1twistsec}, we showed that a $1$-twist rim surgery does not change the fundamental group.  In this section, we show that the surface $\Sigma_K(1)$ produced by such a surgery is standard up to $s$-cobordism.   The construction is similar to that in the previous section, but we allow $K$ to be slice (rather than ribbon), and impose no hypothesis on the cyclic coverings of $S^3 - K$.
\begin{theorem}\label{1twistscob}  Suppose that $\alpha \subset \Sigma$ is an embedded curve that has a null-homotopic pushoff into $X - \Sigma$.  Then for any slice knot $K$, the surface $\Sigma_K(1)$ obtained by $1$-twist rim surgery along $\alpha$ is $s$-cobordant to $\Sigma$.
\end{theorem}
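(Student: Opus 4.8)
The plan is to mimic the construction in Section~\ref{topclass}, building a relative cobordism $W$ between $M_0 = X - \nun{\Sigma}$ and $M_1 = X - \nun{\Sigma_K(1)}$ from a slice disk for $K$, then showing it is an $s$-cobordism. Since $K$ is slice, fix a slice disk, or rather a concordance $A \subset S^3 \times I$ from $K$ to the unknot $O$; unlike the ribbon case we do \emph{not} get that $\pi_1(S^3 - K) \to \pi_1(S^3 \times I - A)$ is onto, but for $1$-twist surgery the fundamental group computation of Proposition~\ref{1twistfundagp} did not use any covering-space argument or surjectivity, so this should not be needed. As in the ribbon case, split $S^3 = B^3_+ \cup B^3_-$, write $A_+ \subset B^3_+ \times I$ for the restricted concordance between the arcs $K_+$ and $O_+$, extend the twist map $\tau$ over $(B^3_+ \times I, A_+)$, and form $(X \times I, (\Sigma \times I)_A(1))$ by removing $S^1 \times (B^3 \times I, I \times I)$ from $X \times I$ and gluing back $S^1 \times_\tau (B^3 \times I, A_+)$. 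Set $W = X \times I - \nun{(\Sigma \times I)_A(1)}$, so that
\begin{equation*}
W = (X - (S^1 \times B^3) - \nun{\Sigma}) \times I \cup S^1 \times_\tau (B^3 \times I - \nun{A_+}),
\end{equation*}
with $X \times 0 = (X,\Sigma)$ and $X \times 1 = (X, \Sigma_K(1))$, and $W$ a product along the boundary.

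The first step is to show $\pi_1(W) \cong G = \pi_1(X - \Sigma)$. This should follow directly from the van Kampen argument of Proposition~\ref{1twistfundagp} applied to the displayed decomposition of $W$: the class $[\alpha] = \varphi_1[S^1]$ is trivial and $\varphi_1[\mu]$ is the meridian, so $\pi_1(W)$ is generated by $\pi_1((X - S^1 \times B^3 - \nun{\Sigma}) \times I) \cong \pi_1(X - \Sigma)$ together with $\pi_1(S^1 \times_\tau (B^3 \times I - \nun{A_+}))$ subject to $[S^1] = 1$ and the meridian identifications; since $B^3 \times I - \nun{A_+}$ deformation retracts appropriately and the image of its $\pi_1$ in $\pi_1(W)$ is carried by the meridian, the presentation collapses exactly as in Proposition~\ref{1twistfundagp} to give $G$. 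The inclusion $M_1 \hookrightarrow W$ is clearly $\pi_1$-surjective, and a diagram chase using $H_1(M_1) = H_1(W) = \Z/d$ (and the fact that the meridian has the same order in both) shows it is an isomorphism.

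Next, to show $W$ is an $h$-cobordism, I would prove $H_*(\widetilde W, \widetilde M_1) = 0$ for the universal covers. The key simplification, exactly as in Proposition~\ref{prop:cobo} and in the observation at the end of Section~\ref{1twistsec}, is that the image of $\pi_1(S^1 \times_\tau \cp(K))$ in $\pi_1(W)$ is the cyclic subgroup generated by the meridian. Hence the preimage of the glued-in piece $S^1 \times_\tau (B^3 \times I - \nun{A_+})$ in $\widetilde W$ is a disjoint union of copies of its own maximal abelian-over-meridian cover — in effect, copies indexed by $G/\langle \mu \rangle$ of the infinite cyclic cover $(S^1 \times_\tau (B^3 \times I - \nun{A_+}))^\infty$, whose fiber is the universal abelian cover $(B^3 \times I - \nun{A_+})^\infty$ — and similarly for the piece glued into $M_1$, with fiber $(B^3 - \nun{K_+})^\infty = (S^3 - K)^\infty$. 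By excision,
\begin{equation*}
H_*(\widetilde W, \widetilde M_1) \cong \bigoplus\nolimits_{G/\langle\mu\rangle} H_*\!\left((B^3 \times I - \nun{A_+})^\infty, (S^3 - K)^\infty\right).
\end{equation*}
Because $K$ is a \emph{slice} knot, $A$ is a concordance to the unknot, and by Stallings/Milnor the inclusion $(S^3 - K)^\infty \hookrightarrow (S^3 \times I - A)^\infty$ induces a homology isomorphism (both have the homology of a circle with trivial Alexander module over the relevant completion — the concordance group acts trivially on rational Alexander modules, and the infinite cyclic cover of a concordance exterior is acyclic relative to either end after tensoring appropriately); restricting to the arc exteriors $B^3 - \nun{K_+}$ and $B^3 \times I - \nun{A_+}$, the relative homology vanishes. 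So $H_*(\widetilde W, \widetilde M_1) = 0$, and by the Whitehead theorem $M_1 \to W$ is a homotopy equivalence.

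Finally, for the torsion: as in Theorem~\ref{homeo}, decompose $(W, M_1) = (K_1, L_1) \cup (K_2, L_2)$ with $K_1 = (X - S^1 \times B^3 - \nun{\Sigma}) \times I$, $L_1 = X - S^1 \times B^3 - \nun{\Sigma}$ (a product pair, torsion $0$), $K_2 = S^1 \times_\tau (B^3 \times I - \nun{A_+})$, $L_2 = S^1 \times_\tau (B^3 - \nun{K_+})$, and intersection piece $(K_0, L_0)$ with acyclic, hence trivially-torsioned, relative complex. By multiplicativity, $\tau(W, M_1) = \tau^{i_2}(K_2, L_2)$ up to the trivial factors. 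Viewing $(K_2, L_2)$ as a relative mapping torus over $S^1$ with monodromy $\tau$ (which is the $1$-twist map) and relative fiber $(B^3 \times I - \nun{A_+}, B^3 - \nun{K_+})$, the Milnor/Wang short exact sequence of chain complexes (as in Proposition~4.4 of~\cite{kim:surfaces}) expresses $\tau^{i_2}(K_2,L_2)$ in terms of the fiber torsion $\tau^j(F, F_0)$; since the relative fiber homology vanishes and $\tau$ restricted to a slice-disk-exterior pair over a \emph{single} untwisted gluing (the $m=1$ case) contributes a term of the form $\det(\mathrm{id} - t\tau_*)$ evaluated where $\tau_*$ is unipotent on the trivial relative module, this is a unit of the form $\pm g$, hence zero in $\mathrm{Wh}(G)$.

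\textbf{Main obstacle.} The hard part is the homology vanishing $H_*\!\left((B^3 \times I - \nun{A_+})^\infty, (S^3 - K)^\infty\right) = 0$ for a merely \emph{slice} (not ribbon) knot, over $\Z[G]$ rather than $\Z[t^{\pm}]$. For a ribbon $K$ one had the $\pi_1$-surjectivity that made the covering-space bookkeeping collapse cleanly; here one must instead exploit that $m = 1$ forces the glued-in piece to meet the rest only along the meridian-cyclic subgroup, so the relevant coefficients really are just $\Z[t^{\pm}]$ on each component and the classical fact that a slice knot's concordance exterior is $\Z[t^{\pm}]$-homologically a product suffices — articulating this reduction carefully, and checking the torsion term is genuinely trivial (not merely well-defined), is where the real work lies.
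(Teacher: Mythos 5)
Your overall plan — build $W$ from a concordance to the unknot, show $\pi_1(W) \cong G$, prove $H_*(\widetilde W,\widetilde M_1)=0$, then handle the torsion — matches the paper's, and your $\pi_1$ computation is fine. But the homology step has a genuine gap, and the gap is exactly the ``main obstacle'' you flag at the end, so let me be precise about why your proposed resolution does not work and what the paper does instead.

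You assert that, by excision, $H_*(\widetilde W,\widetilde M_1)$ is a direct sum of copies of $H_*\bigl((B^3 \times I-\nun{A_+})^\infty,(S^3-K)^\infty\bigr)$, and that this vanishes because ``the infinite cyclic cover of a concordance exterior is acyclic relative to either end.''  Both the reduction and the vanishing claim are wrong.  First, the covering-space bookkeeping: the cover of the mapping torus $S^1 \times_\tau F$ associated to $\pi_1 \to \langle\mu\rangle$ is $S^1 \times_{\tilde\tau} F^k$ (or $F^\infty$), where $\tilde\tau$ is the lift of $\tau$ to the cyclic cover — and $\tilde\tau$ is \emph{not} isotopic to the identity.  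Since $\tau$ is a meridional Dehn twist and the meridian is precisely the direction that unwinds, $\tilde\tau$ is isotopic to the \emph{generating covering translation}.  That is the content of the paper's Lemma~\ref{zcover}: the mapping torus of the covering translation on $Y^k$ (resp.\ $\tY$) is diffeomorphic to $S^1 \times Y$ (resp.\ $\R \times Y$) — a product with the \emph{base}, not with the cyclic cover.  So the relative homology you need is $\bigoplus H_*(S^1\times F, S^1\times F_0)$, i.e.\ ordinary $\Z$-homology of $(B^3\times I-\nun{A_+}, B^3-\nun{K_+})$, which vanishes by Alexander duality for any concordance.  No Alexander module enters.  Second, even if your excision formula were right, the vanishing you invoke is false: for a general slice knot the inclusion $(S^3-K)^\infty \hookrightarrow (B^4-\nun{\Delta})^\infty$ is not a homology isomorphism — the kernel on $H_1$ is a metabolizer for the Blanchfield pairing, typically nontrivial.  (This is why, in the ribbon-knot iterated case of Theorem~\ref{homeo}, the paper must \emph{impose} the hypothesis $H_1((S^3-J)^d)\cong\Z$; that hypothesis is precisely the crutch that $m=1$ lets one discard via Lemma~\ref{zcover}, and your proposal has removed the crutch without supplying the replacement.)  Your torsion sketch inherits the same problem: once Lemma~\ref{zcover} reduces the acyclicity input to ordinary $\Z$-homology of the concordance pair, the torsion vanishing follows as in Theorem~\ref{homeo}; without it, the claim that $\tau_*$ is ``unipotent on the trivial relative module'' is unsupported, since the module is not visibly trivial.
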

The homology computations requires a preliminary lemma.  Suppose that $\pi: \tY \to Y$ is an infinite cyclic cover, and that $T:\tY \to \tY$ generates the group of covering translations.    Note that for any $k$, the quotient $Y^k = \tY/\la T^k \ra$ is a $k$-fold cyclic cover of $Y$, and that $T$ descends to a generator of the covering translations of $Y^k$.
\begin{lemma}\label{zcover}
The mapping torus $S^1 \times_T \tY$ is homeomorphic to $\R \times Y$.  Moreover, $S^1 \times_T Y^k$ is homeomorphic to $S^1 \times Y$.
\end{lemma}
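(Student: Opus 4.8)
The plan is to build the homeomorphism $S^1 \times_T \tY \to \R \times Y$ directly, using the product structure that an infinite cyclic cover carries. Recall that $\pi : \tY \to Y$ being an infinite cyclic cover means we have a classifying map $Y \to S^1$ (up to homotopy) and, more usefully here, that the covering translation $T$ acts freely and properly discontinuously with quotient $Y$. First I would observe that $\tY$ admits a proper map $f : \tY \to \R$ that is $T$-equivariant in the sense that $f(T(x)) = f(x) + 1$; such an $f$ is obtained by lifting the classifying map $Y \to S^1 = \R/\Z$ and is standard. The pair $(\tY, f)$ is then acted on by $\Z = \la T\ra$ compatibly with the translation action of $\Z$ on $\R$.

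The key step is to identify the mapping torus. By definition $S^1 \times_T \tY = (\R \times \tY)/\Z$, where $n \in \Z$ acts by $(s,x) \mapsto (s+n, T^n(x))$. Consider the map $\Phi : \R \times \tY \to \R \times \tY$ given by $\Phi(s,x) = (s, x)$ composed conceptually with the "shear" — concretely, I would define $F : \R \times \tY \to \R \times Y$ by $F(s,x) = (s - f(x),\ \pi(x))$. This is well defined, and I claim it descends to the quotient by the diagonal $\Z$-action: indeed $F(s+n, T^n x) = (s + n - f(T^n x),\ \pi(T^n x)) = (s + n - f(x) - n,\ \pi(x)) = (s - f(x), \pi(x)) = F(s,x)$, using $f(T^n x) = f(x) + n$ and $\pi \circ T = \pi$. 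So $F$ factors through a continuous map $\bar F : S^1 \times_T \tY \to \R \times Y$. For the inverse, given $(t, y) \in \R \times Y$, choose any lift $x \in \tY$ of $y$; then the point $[t + f(x),\ x] \in S^1 \times_T \tY$ is independent of the choice of lift (changing $x$ to $T^n x$ changes $t + f(x)$ to $t + f(x) + n$ and $x$ to $T^n x$, which is the same class), giving a continuous two-sided inverse. Hence $\bar F$ is a homeomorphism.

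For the second statement, apply the first with $Y$ replaced by $Y^k = \tY/\la T^k\ra$, whose infinite cyclic cover is again $\tY$ with deck group generated by $T$ — wait, more carefully: $Y^k$ has infinite cyclic cover $\tY$ with covering translation generated by $T$ (a single $T$, since $\tY \to Y^k$ is $\Z$-covering with deck transformation $T$ viewed modulo nothing — the relevant infinite cyclic cover of $Y^k$ corresponding to the composite $\tY \to Y^k$ has deck group $\Z$). Actually the clean route is: $S^1 \times_T Y^k = S^1 \times_T (\tY / \la T^k\ra)$. Since $T$ descends to a generator $\bar T$ of the covering translations of $Y^k$ and $\bar T^k$ is already trivial on $Y^k$... the cleanest formulation: the $k$-fold cyclic cover of $S^1 \times_T Y^k$ corresponding to $\R \xrightarrow{\times k} S^1$ on the base circle is $S^1 \times_{T^k} Y^k = S^1 \times_{\mathrm{id}} Y^k = S^1 \times Y^k$... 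I think the intended and correct statement follows most transparently by noting $S^1\times_T Y^k$ fibers over $S^1$ with fiber $Y^k$ and monodromy $\bar T$, and the mapping torus of $\bar T$ on $Y^k$ is homeomorphic to $S^1 \times Y$ because $Y^k \to Y$ is a regular cyclic cover realizing $\bar T$ as a deck transformation, so the mapping torus of $\bar T$ is the total space of the $Y$-bundle over $S^1$ that "unwinds" to $S^1 \times Y$; I would spell this out by the same explicit-coordinates argument as above, now with $f$ replaced by a map $Y^k \to S^1$ lifting to $\tY \to \R$.

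The main obstacle is purely bookkeeping: being careful about which infinite cyclic cover and which deck group is in play when passing to $Y^k$ in the second assertion, since $Y^k$ is itself covered by $\tY$ and one must not conflate the $\Z$-action on $\tY$ with its reduction mod $k$. Once the equivariant proper map $f : \tY \to \R$ with $f \circ T = f + 1$ is in hand, both homeomorphisms are written down by the explicit shearing formula above and checked to be mutually inverse; there is no hard analysis, only care with the quotient identifications.
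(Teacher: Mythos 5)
Your argument for the first homeomorphism is correct and is, in substance, the paper's proof: the authors phrase it as a pullback along the classifying map $Y \to S^1$ to the universal case $Y = S^1$, $\tY = \R$, where the trivializing map is exactly your shear; you have instead lifted the classifying map to the $T$-equivariant $f : \tY \to \R$ and written the shear in one step, which is clean and checks out.

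Your sketch of the second homeomorphism is where the bookkeeping you worry about actually bites, and as written it does not quite close. If you push $f$ down to a map $Y^k \to \R/\Z$, that map is $\bar T$-\emph{invariant} (since $\bar T$ shifts $f$ by $1$, and $1 = 0$ in $\R/\Z$), so the shear $(s,y) \mapsto (s - \bar f(y),\ p(y))$, where $p : Y^k \to Y$ is the covering projection, is \emph{not} invariant under $(s,y)\mapsto(s+n,\bar T^n y)$ and fails to descend. The correct target circle is $\R/k\Z$: since $f(T^k x) = f(x) + k$, the map $f$ induces $\bar f : Y^k \to \R/k\Z$ with $\bar f(\bar T y) = \bar f(y) + 1$ and $1 \neq 0$ in $\R/k\Z$, and then $(s,y)\mapsto([s] - \bar f(y),\ p(y))$ descends to a homeomorphism $S^1 \times_{\bar T} Y^k \to (\R/k\Z)\times Y \cong S^1 \times Y$. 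Equivalently — and this is what the paper's ``the trivialization descends'' is unpacking — $S^1 \times_T \tY \to S^1 \times_{\bar T} Y^k$ is the $\Z$-cover with deck transformation $[s,x]\mapsto[s,T^k x]=[s-k,x]$; your $\bar F$ carries this to translation by $k$ in the $\R$ factor of $\R\times Y$, and quotienting both sides by these $\Z$-actions yields $S^1\times_{\bar T}Y^k \cong (\R/k\Z)\times Y$. So the plan is sound, but ``replace $f$ by a map $Y^k\to S^1$'' must mean a map to the circle $\R/k\Z$, not to $\R/\Z$.
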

\begin{proof}
Our convention is that the mapping torus is given by $\R \times \tY$, modulo the relation $(x,y) \sim (x-1,Ty)$.   The map $(x,y) \mapsto  \pi(y)$ descends to an $\R$ bundle over $Y$, and the same map descends to an $S^1$ bundle $S^1 \times_T Y^k \to Y$.  To see that these bundles are trivial, note that the covers $\tY \to Y$ and $Y^k \to Y$ are induced from the standard infinite and finite cyclic covers $\R \to S^1$ and $S^1 \to S^1$ by a map $f: Y \to S^1$.    This implies that the $\R$-bundle $S^1 \times_T \tY \to Y$ is induced from $S^1 \times_T \R \to S^1$ by the same map, and likewise for the circle bundles.

Hence, it suffices to consider the case where $Y = S^1= \R/\Z$, with $\tY = \R$.  Then the first bundle is trivialized by the isomorphism $\R \times S^1 \to S^1 \times_T \R$ that takes $(r,[y])$ to $[y+r,-y]$, where the brackets $[\ ]$ denote equivalence classes.  This trivialization descends to a trivialization of the circle bundle as well.
 \end{proof}

\begin{proof}[Proof of Theorem~\ref{1twistscob}]
The proof uses the same technique as in Proposition~\ref{prop:cobo} and Theorem~\ref{homeo}, so we will be brief.  A concordance of $K$ to the unknot produces a cobordism between $X -\nun{\Sigma}$ and $X- \nun{\Sigma_K(1)}$, as described at the beginning of this section.  The isomorphism
$G =\pi_1(X- \nun{\Sigma}) \cong \pi_1(X- \nun{\Sigma_K(1)})$ established in Proposition~\ref{1twistfundagp} works as well to calculate that the fundamental group of $W = X\times I-\nun{(\Sigma\times I)_A(m)}$ is also $G$.  So the remaining points are to show the vanishing of the relative homology groups of the universal covers of
$(W,X- \nun{\Sigma})$, and the Whitehead torsion.

As observed just after the proof of Proposition~\ref{1twistfundagp}, the image of $\pi_1(S^1\times_{\tau}(B^3 -\nun{K_+}))$ in $ \pi_1(X - \nun{\Sigma_{K}(1)})$ is the cyclic subgroup generated by the meridian of $ \Sigma_{K}(1)$; the same is true for the image of  $\pi_1(S^1\times_{\tau}(B^3\times I -\nun{A_+})$ in $G$.  It follows that in the universal cover, the preimage of $S^1\times_{\tau}(B^3 -\nun{K_+})$ is a union of its finite or infinite cyclic covers, the order of the meridian in $\pi_1(X -\Sigma_{K}(1))$ determining the order of the covering.  The same is true for the preimage of $B^3\times I -\nun{A_+}$.   Note that these coverings are mapping tori, as in Lemma~\ref{zcover}, where in place of the covering transformation $T$, we have a lift of the twist map $\tau$.   But the lift $\tilde\tau$ defined in~\eqref{lifttau} is isotopic to the covering transformation, so we can apply Lemma~\ref{zcover} to compute the homology of these covering spaces.  It follows that each $(B^3\times I -\nun{A_+},B^3 -\nun{K_+})$ lifts to a relative homology cobordism, and hence (by the Mayer-Vietoris argument in Proposition~\ref{prop:cobo}) that $W$ is a relative h-cobordism.

The torsion calculation in Theorem~\ref{homeo} depends only on the vanishing of the relative homology of $(B^3\times I -\nun{A_+},B^3 -\nun{K_+})$, with coefficients in the group ring $\Z[G]$ induced by the inclusion of these spaces into $W$.  But the argument in the preceding paragraph implies this vanishing, so that the torsion is trivial.
\end{proof}
\begin{corollary}\label{1twisthomeo}
With the hypotheses of Theorem~\ref{1twistscob}, if the group $G$ is good, then the knots $\Sigma_K(1)$ and $\Sigma$ are topologically equivalent.
\end{corollary}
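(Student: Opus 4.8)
The plan is to combine Theorem~\ref{1twistscob} with the $5$-dimensional topological $s$-cobordism theorem, which is available precisely because $G$ is assumed good. By Theorem~\ref{1twistscob} the exteriors $M_0 = X-\nun{\Sigma}$ and $M_1 = X-\nun{\Sigma_K(1)}$ cobound an $s$-cobordism $W$, and $W$ is a product along the boundary, so $\partial W = M_0 \cup (\partial M_0 \times I) \cup M_1$ with $\partial M_0$ identified with $\partial M_1$. Since $\pi_1(W) = G$ is good, the $s$-cobordism theorem gives a homeomorphism $W \cong M_0 \times I$ rel $\partial$, hence a homeomorphism $h : M_0 \to M_1$ whose restriction to $\partial M_0 = \partial \nun{\Sigma}$ is the canonical identification with $\partial M_1 = \partial\nun{\Sigma_K(1)}$ coming from the product structure on $\partial W$.

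It then remains to extend $h$ over the excised tubular neighborhoods. Rim surgery as in~\eqref{twistrimsurgery} is supported in $S^1 \times B^3 \subset \nu(\alpha)$, changes neither the genus of the surface nor its homology class in $H_2(X)$, and the relative cobordism $(X\times I, (\Sigma\times I)_A(1))$ is a product on the tubular-neighborhood region as well as along the boundary. Consequently $\nun{\Sigma}$ and $\nun{\Sigma_K(1)}$ are $D^2$-bundles over a common surface with the same Euler number, and the boundary identification induced by $W$ is the restriction of a fibre-preserving homeomorphism $\nun{\Sigma} \to \nun{\Sigma_K(1)}$. Gluing $h$ to this homeomorphism along $\partial \nun{\Sigma}$ produces a homeomorphism $H : X \to X$ of pairs with $H(\Sigma) = \Sigma_K(1)$, which is the desired topological equivalence.

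The only step that requires genuine care is the boundary bookkeeping in the second paragraph: one must verify that the boundary homeomorphism coming from the $s$-cobordism theorem is compatible with the product structure built into the construction of $\Sigma_K(1)$ near $\nun{\Sigma}$, so that the extension across the normal disk bundle actually exists. This is a direct consequence of the fact that, by construction, everything takes place inside $S^1 \times (B^3 \times I)$ and $W$ was arranged to be a product on the complement of this region and on $\partial M_0 \times I$. Once this is in hand there is no further obstruction; the hypothesis that $G$ be good is used only to invoke the $s$-cobordism theorem, and nowhere else.
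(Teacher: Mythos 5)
Your proof is correct and takes essentially the same approach the paper intends: the corollary is stated without a written proof precisely because it is the routine consequence of Theorem~\ref{1twistscob} together with the relative topological $s$-cobordism theorem for good groups, followed by the standard extension of the resulting boundary-preserving homeomorphism of exteriors across the tubular neighborhoods (which is possible since rim surgery preserves the genus and homology class, hence the normal disk-bundle type). You have supplied the bookkeeping details the authors leave implicit, and everything checks out.
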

\section{Smooth classification}\label{smoothsec}
To distinguish, in the smooth category, the knots that we have constructed, we make use of the results of Fintushel and Stern~\cite{fs:surfaces}.   They start with a surface $\Sigma$ such that $\Sigma \cdot \Sigma = 0$ and $(X,\Sigma)$ is an SW-pair, and show that for knots $K_1,\ K_2$, the equality $(X,\Sigma_{K_1}) \cong (X,\Sigma_{K_2})$ implies that the coefficients of $\Delta_{K_1}$ coincide with those of $\Delta_{K_2}$ (including multiplicities).   This result (described in the addendum to the original paper) is proved using the gluing theory in~\cite{kronheimer-mrowka:monopole}.   Note that by blowing up, one can convert a surface with positive self-intersection into one with $0$ self-intersection; symplectic surfaces of {\em negative} self-intersection are treated in a recent preprint of T.~Mark~\cite{mark:hf-surfaces}.

The extra $m$-twist does not affect the gluing theorem, and so (with hypotheses as above) the surfaces $(X,\Sigma_{K_1}(m)) $ and $(X,\Sigma_{K_2}(m))$ are distinguished smoothly if the coefficients of their Alexander polynomials form distinct sets.  Here is a sample result that one gets by combining these observations with the constructions of section~\ref{iterated} and the topological classification results in section~\ref{topclass}.
\begin{theorem}\label{s2s2}
For any odd number $p$, there are infinitely many  topologically equivalent but  smoothly inequivalent knots in $S^2 \times S^2$ with dihedral knot group $D_{2p}$ with homology class $(2,2)$ in the obvious basis for $H_2(S^2 \times S^2)$.
\end{theorem}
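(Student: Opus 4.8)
The plan is to install the dihedral group by a single $2$-twist rim surgery on a symplectic model, and then produce the infinite family by iterated $1$-twist rim surgeries along slice knots, which by Section~\ref{sec:1twistscob} change neither the knot group nor the topological type. First I would take a smooth symplectic surface $\Sigma_0 \subset S^2\times S^2$ in the class $(2,2)$; this is a symplectic torus with $H_1(S^2\times S^2-\Sigma_0)=\pi_1(S^2\times S^2-\Sigma_0)=\Z/2$, generated by the meridian, and $(S^2\times S^2,\Sigma_0)$ is an SW-pair. I would pick a curve $\alpha_0\subset\Sigma_0$; since the complement has abelian fundamental group of order $2$, a framing of the normal bundle along $\alpha_0$ can be chosen so that the pushoff of $\alpha_0$ is null-homologous, hence null-homotopic, in $S^2\times S^2-\Sigma_0$. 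For $p$ odd the two-bridge link $K_{p,q}$ (with $(p,q)=1$) is an honest knot, so I may perform the $2$-twist rim surgery along the rim torus $\alpha_0\times\mu_{\Sigma_0}$, obtaining $\Sigma_1 := (\Sigma_0)_{K_{p,q}}(2)$, still in the class $(2,2)$. By Lemma~\ref{branchcover}, $\pi_1(S^2\times S^2-\Sigma_1)$ is a semidirect product of $\pi_1((S^3,K_{p,q})^2)$ by $\Z/2$; since the double branched cover of $S^3$ along $K_{p,q}$ is the lens space $L(p,q)$, with fundamental group $\Z/p$ and covering involution acting by $x\mapsto x^{-1}$, we get $\pi_1(S^2\times S^2-\Sigma_1)\cong \Z/p\rtimes_{-1}\Z/2 = D_{2p}$ as in item~(\ref{dihedral}), and the meridian still has order $2$.

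Next I would fix a nontrivial ribbon knot $K_0$ (for instance $6_1$, or the square knot $3_1\#\overline{3_1}$), let $J_n$ be the connected sum of $n$ copies of $K_0$, so that $\Delta_{J_n}=(\Delta_{K_0})^n$ has breadth growing linearly in $n$, and set $\Sigma_{1,n} := (\Sigma_1)_{J_n}(1) = (\Sigma_0)_{K_{p,q},J_n}(2,1)$, the $1$-twist rim surgery on $\Sigma_1$ along a rim curve parallel to $\alpha_0$; this curve still has a null-homotopic pushoff by the observation in Section~\ref{iterated}, so the surgery is legitimate. Since $(2,1)=1$, Corollary~\ref{iteratedtwist} gives $\pi_1(S^2\times S^2-\Sigma_{1,n})\cong D_{2p}$ for all $n$, and rim surgery preserves the homology class, so $[\Sigma_{1,n}]=(2,2)$. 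As $D_{2p}$ is finite, hence good, and each $J_n$ is slice, Theorem~\ref{1twistscob} and Corollary~\ref{1twisthomeo} show that $\Sigma_{1,n}$ is topologically equivalent to $\Sigma_1$ for every $n$, and therefore all the $\Sigma_{1,n}$ are topologically equivalent to one another.

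To distinguish them smoothly I would argue as in Section~\ref{smoothsec}. By the remark following Corollary~\ref{iteratedtwist}, $\Sigma_{1,n}$ is obtained from $(S^2\times S^2,\Sigma_0)$ by a single twisted rim surgery built from a diffeomorphism of $E(K_{p,q}\#J_n)$, and the Alexander polynomial entering the Fintushel--Stern gluing computation is $\Delta_{K_{p,q}\#J_n}=\Delta_{K_{p,q}}\cdot\Delta_{J_n}$. Because $[\Sigma_0]^2=8\ne 0$, I would first blow up at eight points of $\Sigma_0$ lying off the rim curves, replacing $\Sigma_0$ by a symplectic surface $\Sigma'_0$ of self-intersection $0$; the pair $(S^2\times S^2\,\#\,8\,\overline{{\bf CP}}{}^2,\Sigma'_0)$ is again an SW-pair and the rim surgeries carry over unchanged, so the blowup of $\Sigma_{1,n}$ is $(\Sigma'_0)_{K_{p,q},J_n}(2,1)$. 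If $(S^2\times S^2,\Sigma_{1,n})$ were diffeomorphic as a pair to $(S^2\times S^2,\Sigma_{1,n'})$, blowing up at corresponding points would make the two blown-up pairs diffeomorphic, and --- since the extra twisting does not affect the gluing argument --- the Fintushel--Stern theorem would force $\Delta_{K_{p,q}}\Delta_{J_n}$ and $\Delta_{K_{p,q}}\Delta_{J_{n'}}$ to have the same multiset of coefficients, which is impossible for $n\ne n'$ since these Laurent polynomials have distinct breadths. Hence the surfaces $\Sigma_{1,n}$, $n\ge 1$, are pairwise smoothly inequivalent.

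The bookkeeping with Lemma~\ref{branchcover}, Corollary~\ref{iteratedtwist}, Theorem~\ref{1twistscob}, and Corollary~\ref{1twisthomeo} is routine. The main obstacle --- absorbed here by quoting Section~\ref{smoothsec} --- is that the pair $(S^2\times S^2,\Sigma_0)$ and its blowup really are pairs for which the Kronheimer--Mrowka/Fintushel--Stern gluing formalism detects the Alexander polynomial of the rim-surgery knot (this is where the $b^+=1$ subtleties and the canonical symplectic chamber enter), together with the fact that the twist maps $\tau^{m_i}$ do not disturb that computation. Granting these, the only step specific to this statement is the choice of the family $\{J_n\}$ making the products $\Delta_{K_{p,q}}\Delta_{J_n}$ visibly distinct.
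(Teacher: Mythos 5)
Your construction is correct, and it reaches the paper's conclusion by a slightly different route within the paper's own toolkit. The paper iterates a $3$-twist rim surgery on $\Sigma_{K_{p,q}}(2)$ along ribbon knots $J_n = \#_n(J\#{-J})$ where $J$ is required to have determinant~$1$, then invokes Theorem~\ref{homeo} (which needs $J$ ribbon, $(m,d)=1$, \emph{and} the double cover condition $H_1((S^3-J)^2)\cong\Z$) to conclude topological triviality. You instead iterate a $1$-twist rim surgery along arbitrary slice knots $\#_n K_0$ and invoke Theorem~\ref{1twistscob} / Corollary~\ref{1twisthomeo}, which require only sliceness and a null-homotopic pushoff of $\alpha$. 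The group is then preserved either by Corollary~\ref{iteratedtwist} with $(m_1,m_2)=(2,1)$, as you do, or just by Proposition~\ref{1twistfundagp}. What your version buys is that the determinant-$1$ arithmetic condition disappears entirely, so the family of auxiliary knots can be chosen more freely; what the paper's version illustrates is that the $(m,d)$-coprime branch of the machinery (Theorem~\ref{homeo}) also produces such families. The smooth-distinction step is the same in both cases, and your explicit treatment of the eight blowups to reach self-intersection $0$ is a reasonable spelling out of what the paper handles implicitly in Section~\ref{smoothsec}; your caveat about the $b^+=1$ chamber issue matches the paper's reliance on the Fintushel--Stern addendum and Kronheimer--Mrowka gluing. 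One small note: you write ``symplectic surface'' where the paper specifies a complex curve of bidegree $(2,2)$ --- the assertion $\pi_1(S^2\times S^2-\Sigma_0)\cong\Z/2$ is standard for the complex curve, and one should take $\Sigma_0$ to be that curve (or a surface known to be isotopic to it) rather than an arbitrary symplectic representative of the class.
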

\begin{proof}
Let $X = S^2 \times S^2$. Choose a complex curve $\Sigma$ in the homology class $(2,2)$; this will be a torus of square $8$, and have group $\Z/2$.  For any odd $p$ and $q$ relatively prime to $p$, the surface $(X,\Sigma_{K_{p,q}}(2))$ has group $D_{2p}$, by Lemma~\ref{branchcover}.   Let $J$ be any knot with non-trivial Alexander polynomial but with determinant $1$.   For any positive $n$, let $J_n$ be the ribbon knot $\#_n (J \# -J)$.  By Theorem~\ref{homeo}, the knots $(X,\Sigma_{K_{p,q},J_n}(2,3))$ are all topologically equivalent.  On the other hand, these surfaces are all distinguished smoothly because the coefficient lists for the Alexander polynomials of the $J_n$ are distinct.
\end{proof}

In a different direction, the construction of symplectic surfaces also gives rise to families of smoothly distinct surfaces.
\begin{theorem}\label{infsymp}
Let $G$ be a group satisfying condition~\eqref{Kd}.  Then there is a simply-connected symplectic $4$-manifold $M$ containing a symplectically embedded surface $S$, and infinitely many smoothly embedded surfaces $S_n$ in the same homology class with $\pi_1(M-S_n) \cong G$.    If $G$ is a good group then these surfaces can be taken to be topologically equivalent.
\end{theorem}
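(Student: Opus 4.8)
The plan is to combine the symplectic construction of Theorem~\ref{sympd}, which produces the simply-connected symplectic pair $(M,S)$ with $\pi_1(M-S) \cong G$, with the $1$-twist rim surgery machinery of Sections~\ref{1twistsec} and~\ref{sec:1twistscob}, and to detect smooth inequivalence via the Fintushel--Stern--type Alexander polynomial invariant. First I would recall that the surface $S$ produced in Theorem~\ref{sympd} is a symplectic torus of self-intersection $0$; since $M$ was built as a sequence of symplectic fiber sums with copies of $E(1)$ along tori of square zero, $M$ has a nontrivial Seiberg--Witten invariant, and in fact one can arrange (as in~\cite{fs:surfaces}) that $(M,S)$ is an SW-pair. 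I would then pick an embedded curve $\alpha \subset S$ with null-homotopic pushoff into $M - S$ — such a curve exists because $S$ is a torus sitting inside the fiber-direction of the construction, and its longitude bounds in the complement — so that the hypotheses of Proposition~\ref{1twistfundagp} and Theorem~\ref{1twistscob} are met.

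Next, for a slice (in fact ribbon) knot $K$ with nontrivial Alexander polynomial, form $S_K = S_K(1)$ by $1$-twist rim surgery along the rim torus parallel to $\alpha$. By Proposition~\ref{1twistfundagp}, $\pi_1(M - S_K) \cong \pi_1(M - S) \cong G$, and by Theorem~\ref{1twistscob} the surface $S_K$ is $s$-cobordant to $S$; hence if $G$ is good, Corollary~\ref{1twisthomeo} gives that $S_K$ and $S$ are topologically equivalent. To produce the infinite family, take a fixed knot $J$ with nontrivial Alexander polynomial and determinant $1$ (or more simply, any $J$ with $\Delta_J \neq 1$), set $J_n = \#_n(J \# -J)$ — a ribbon (hence slice) knot — and let $S_n = S_{J_n}(1)$. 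Each $S_n$ lies in the same homology class as $S$ (rim surgery does not change the homology class), has $\pi_1(M - S_n) \cong G$, and is topologically equivalent to $S$ when $G$ is good. Smooth inequivalence follows from the gluing-theoretic result of Fintushel--Stern: since $(M,S)$ is an SW-pair and $S$ has square zero, the set of coefficients of $\Delta_{J_n}$ is a smooth invariant of the pair $(M, S_n)$ — the $1$-twist does not interfere with the gluing formula, exactly as noted at the start of Section~\ref{smoothsec} for the $m$-twist in general — and the coefficient lists of $\Delta_{J_n}(t) = \Delta_J(t)^n \Delta_J(t^{-1})^n$ grow with $n$ and so are pairwise distinct.

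The main obstacle is verifying that the curve $\alpha$ on the symplectic torus $S$ of Theorem~\ref{sympd} genuinely admits a pushoff that is null-homotopic (not merely null-homologous) in $M - S$, since the hypotheses of both Proposition~\ref{1twistfundagp} and Theorem~\ref{1twistscob} require this. In the construction, $M - S$ retracts (up to the fiber sums, which kill only torus subgroups) onto the mapping torus $\alpha \times (S^1 \times_\rho \Sigma_d)$, and the longitude of the torus $S$ can be taken to be one of the $\alpha$-circle factors; one must check carefully that after the fiber sums with $E(1)$ this class dies in $\pi_1$, which it does because $\alpha$ commutes with everything and the $\alpha$-direction is precisely one of the classes killed. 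A secondary point requiring care is confirming the symplectic manifold $M$ has the Seiberg--Witten basic class needed to make $(M,S)$ an SW-pair and to run the gluing argument; this follows from the symplectic sums with $E(1)$ along tori together with the Taubes and Morgan--Mrowka--Szabó gluing results, as in~\cite{fs:surfaces}, but should be stated explicitly. Once these are in hand, the remainder is a direct citation of Theorems~\ref{1twistscob},~\ref{homeo}-style torsion vanishing, Corollary~\ref{1twisthomeo}, and the Fintushel--Stern Alexander polynomial obstruction.
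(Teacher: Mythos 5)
Your proposal follows essentially the same route as the paper's proof: start from $(M,S)$ furnished by Theorem~\ref{sympd}, observe that $S$ is a square-zero symplectic torus so $(M,S)$ is an SW-pair, note that the $\alpha$-circle of the torus $T$ is killed by one of the $E(1)$ fiber sums so its pushoff is null-homotopic in $M-S$, and then apply Proposition~\ref{1twistfundagp}, Theorem~\ref{1twistscob}, Corollary~\ref{1twisthomeo}, and the Fintushel--Stern Alexander-polynomial invariant to the family $S_n = S_{J_n}(1)$ with $J_n=\#_n(J\#-J)$. Your added care over the null-homotopic pushoff and your (correct) remark that the determinant-one condition on $J$ is unnecessary here — since Theorem~\ref{1twistscob} needs only sliceness and imposes no condition on the cyclic covers of $S^3-J$ — are accurate refinements, but the argument is the same one the paper uses.
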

\begin{proof}
Start with the symplectic surface $S$ with group $G$ provided by Theorem~\ref{sympd}; note that since $S$ is symplectic and has $0$ self-intersection, $(M,S)$ is an SW-pair.   In the construction of the surface $S$, we performed fiber sums multiple times, including a fiber sum to kill the generators $\alpha$ and $\beta$ of the fundamental group of the torus $T$.  It follows readily that $\alpha$, pushed into the complement of $S$, is null-homotopic in the complement of $S$.   Choose a sequence of knots $J_n$ as in the previous theorem, and do $1$-twist rim surgeries to create new surfaces $(M,S_{J_n}(1))$, all of which have group $G$.   These are smoothly distinct, as before.

By Theorem~\ref{1twistscob}, if the knots $J_n$ are slice, then the knots $(M,S_{J_n}(1))$ are all $s$-cobordant.  If the group $G$ is good, then the knots are topologically equivalent.
\end{proof}

%\bibliography{mathbib}

\begin{thebibliography}{10}

\bibitem{baldridge-kirk:symplectic-pi1}
Scott Baldridge and Paul Kirk, \emph{On symplectic 4-manifolds with prescribed
  fundamental group}, Comment. Math. Helv. \textbf{82} (2007), no.~4, 845--875.
  \MR{MR2341842}

\bibitem{cao-zhu:ricci}
Huai-Dong Cao and Xi-Ping Zhu, \emph{A complete proof of the {P}oincar\'e and
  geometrization conjectures---application of the {H}amilton-{P}erelman theory
  of the {R}icci flow}, Asian J. Math. \textbf{10} (2006), no.~2, 165--492.
  \MR{MR2233789}

\bibitem{fs:surfaces}
Ronald Fintushel and Ronald~J. Stern, \emph{Surfaces in {$4$}-manifolds}, Math.
  Res. Lett. \textbf{4} (1997), no.~6, 907--914, Addendum:
  http://arxiv.org/abs/math.GT/0511707. \MR{MR1492129 (98k:57047)}

\bibitem{freedman-teichner:subexponential}
Michael H.~Freedman and Peter~Teichner, {\em {$4$}-manifold topology. {I}.
  {S}ubexponential groups}, Invent. Math., {\bf 122} (1995), 509--529.

\bibitem{freedman-quinn}
Michael H.~Freedman and Frank Quinn, \emph{Topology of $4$-manifolds}, Princeton
  University Press, Princeton, N.J., 1990.

\bibitem{gompf:symplectic}
Robert~E. Gompf, \emph{A new construction of symplectic manifolds}, Ann. of
  Math. (2) \textbf{142} (1995), no.~3, 527--595. \MR{MR1356781 (96j:57025)}

\bibitem{kim:surfaces}
Hee~Jung Kim, \emph{Modifying surfaces in 4-manifolds by twist spinning}, Geom.
  Topol. \textbf{10} (2006), 27--56 (electronic). \MR{MR2207789}

\bibitem{kim-ruberman:surfaces}
Hee~Jung Kim and Daniel Ruberman, \emph{Topological triviality of smoothly
  knotted surfaces in 4-manifolds}, {Trans.\ A.M.S.}, to appear;
  http://www.arxiv.org/abs/math.GT/0610204, 2006.

\bibitem{kronheimer-mrowka:monopole}
Peter B. Kronheimer and Tomasz S. Mrowka, \emph{{Monopoles and Three-Manifolds}},
  Cambridge University Press, Cambridge, UK, 2007.

\bibitem{krushkal-quinn:subexponential}
Vyacheslav S. Krushkal and Frank Quinn, {\em Subexponential groups in 4-manifold
  topology}, Geom. Topol., {\bf 4} (2000), 407--430 (electronic).

\bibitem{mark:hf-surfaces}
Thomas~E. Mark, \emph{Knotted surfaces in 4-manifolds},
  http://arxiv.org/abs/0801.4367, 2008.

\bibitem{morgan-tian:poincare}
John Morgan and Gang Tian, \emph{Ricci flow and the {P}oincar\'e conjecture},
  Clay Mathematics Monographs, vol.~3, American Mathematical Society,
  Providence, RI, 2007. \MR{MR2334563}

\bibitem{plotnick:fibered}
Steven~P. Plotnick, \emph{Fibered knots in {$S\sp 4$}---twisting, spinning,
  rolling, surgery, and branching}, Four-manifold theory (Durham, N.H., 1982),
  Contemp. Math., vol.~35, Amer. Math. Soc., Providence, RI, 1984,
  pp.~437--459. \MR{MR780592 (87a:57021)}

\bibitem{plotnick-suciu:spherical}
Steven~P. Plotnick and Alexander~I. Suciu, \emph{Fibered knots and spherical
  space forms}, J. London Math. Soc. (2) \textbf{35} (1987), no.~3, 514--526.
  \MR{MR889373 (88f:57038)}

\end{thebibliography}
%\bibliographystyle{amsplain}
\def\cprime{$'$}
\providecommand{\bysame}{\leavevmode\hbox to3em{\hrulefill}\thinspace}
\providecommand{\MR}{\relax\ifhmode\unskip\space\fi MR }
% \MRhref is called by the amsart/book/proc definition of \MR.
\providecommand{\MRhref}[2]{%
  \href{http://www.ams.org/mathscinet-getitem?mr=#1}{#2}
}
\providecommand{\href}[2]{#2}

 \end{document}